\numberwithin{equation}{section}
\begin{document}

\title{Reference-Point-Based Branch and Bound Algorithm for Multiobjective Optimization}

\titlerunning{}        

\author{Weitian Wu$^1$ \and Xinmin Yang$^2$ }


\institute{ W.T. Wu \at College of Mathematics, Sichuan University, Chengdu 610065, China\\
                    \email{wuweitian@stu.scu.edu.cn}\\
           \Letter X.M. Yang \at National Center for Applied Mathematics of Chongqing 401331, China\\
           School of Mathematical Sciences,  Chongqing Normal University, Chongqing 401331, China\\
              xmyang@cqnu.edu.cn}

\date{Received: date / Accepted: date}

\maketitle

\begin{abstract}
In this paper, a branch and bound algorithm that incorporates the decision maker's preference information is proposed for multiobjective optimization. In the proposed algorithm, a new discarding test is designed to check whether a box contains preferred solutions according to the preference information expressed by means of reference points. In this way, the proposed algorithm is able to gradually guide the search towards the region of interest on the Pareto fronts during the solution process. We prove that the proposed algorithm obtains $\varepsilon$-efficient solutions distributed in the region of interest. Moreover, lower bound on the total finite number of required iterations for predefined precision is also provided. Finally, the algorithm is illustrated with a number of test problems.

\keywords{Multiobjective optimization \and Branch and bound algorithm \and Preference information\and Reference point}
\subclass{90C26\and 90C29\and 90C30}
\end{abstract}

\section{Introduction}
Many real-world optimization problems require to take into account several conflicting objectives simultaneously. Researchers formulate such problems as multiobjective optimization problems (MOPs). Since objectives are conflicting, it is impossible to find a unique solution to optimize each objective to its fullest. Instead, a number of Pareto solutions can be identified, which are characterized by the fact that an improvement in any one objective can only be achieved at the expense of a degradation in at least one other objective. Therefore, without the intervention of the decision maker, none of the Pareto solutions can be said to be worse than others. Following a classification by Hwang and Masud \cite{ref8}, the participation of the decision maker may be done either before (a priori), during (interactive), or after (a posteriori) the solution process.

The a priori methods demand the decision maker to specify a reference point or a reference direction or other preference information before solution process. In these methods, based on such information, a single objective optimization problem can be formed and a single solution be found. However, they are usually not practicable, since it is difficult for the decision maker to explicitly and exactly quantify his/her preferences before any alternatives are known, so the single solution found may not be the preferred one.

Recently, the a posteriori methods have received a great deal of attention from academia \cite{ref5,ref9} and industry \cite{ref2,ref10}. They attempt to provide the decision maker with an entirely approximation of the Pareto set. In this way, the decision maker can look at a large set of generated alternatives before making a decision and thereby revealing her/his explicitly and exactly preferences. As a result, there is no need for re-optimization or further interaction with the decision maker, and the decision maker has greater confidence in the final decision.

Branch and bound algorithms for MOPs \cite{ref6,ref7,ref17,ref23,ref25,ref26} can be classified as a posteriori. They obtain a covering of the entire Pareto set by successively subdividing the variable space into smaller regions and pruning off regions that are provably suboptimal. The reason for their success is the simplicity and extensibility of the method. As a result, many enhancements to branch and bound algorithms for MOPs have been developed, leading to dramatic improvements in solution process. For instance, Fern{\'a}ndez and T{\'o}th \cite{ref7} design three discarding tests for their branch and bound algorithm based on the monotonicity of the objective functions, which speed up the convergence of the algorithm remarkably. The branch and bound algorithms proposed by {\v{Z}}ilinskas et al. \cite{ref25,ref26} use the Lipschitz constants of the objectives to construct lower bounds for the Pareto front. The branch-and-bound-based algorithm presented by Niebling and Eichfelder \cite{ref8} adopts a new discarding test that combines the $\alpha$BB method \cite{ref1} with an extension of Benson's outer approximation techniques \cite{ref5}. Wu and Yang \cite{ref23} propose a parallel branch-and-bound-based framework which employs heuristic search to improve the tightness of the lower and upper bounds.

While a set of alternatives provided by branch and bound algorithms may be very reliable for the decision maker, the search for the whole Pareto set poses high computation cost, if particular the dimension of the variable space is large. The computation cost mainly consists of searching for bounds, storing the exponential number of subboxes, and performing the discarding test. However, as stated for example by Deb in \cite{ref4}, the decision maker is usually not just looking for a single solution, rather she/he is interested in knowing the properties of solutions which are in the region of interest respecting the preference information. In other words, if a number of solutions distributed in the region of interest are found, the decision maker is still able to make a reliable decision. Therefore, in our view, how to use the preferences to avoid exploring undesired regions is the key to reducing the computational cost of branch and bound algorithms.

In this paper, we propose a new interactive algorithm whose principle is to incorporate preference information coming from a decision maker into the branch and bound algorithm. As mentioned earlier, we are interested in approximating a part of the Pareto set instead of the whole one. Hence the decision maker is asked to provide preference information in terms of his/her reference point before solution process, which consists of desirable aspiration levels for objectives. The reference point is usually used together with an  achievement scalarizing function which is integrated into a new discarding test. With the help of the new discarding test, the proposed algorithm is able to gradually guide the search towards the region of interest by excluding the subboxes that do not contain the preferred solution from the exploration. Furthermore, if the decision maker provides multiple reference points, the algorithm can bias the search towards different regions of interest simultaneously. In addition, a heuristic method is used in order to improve the solution quality, and we prove that the proposed algorithm computes a set of $\varepsilon$-efficient solutions.

The rest of this paper is organized as follows. In Section 2, we introduce the basic concepts and notations of multiobjective optimization. The \emph{Reference-point-based Branch and Bound algorithm} (RBB) with a new discarding test is described in Section 3. Section 4 is devoted to some theoretical results. Numerical results are presented in Section 5.

\section{Basics of multiobjective optimization}
In this section we introduce the basic concepts which we need for the new algorithm. A multiobjective optimization problem can be written as follows:
\begin{align}\label{MOP}
\min\limits_{x\in\Omega}\quad F(x)=(f_1(x),\ldots ,f_m(x))^T
\end{align}
with
\begin{align*}
\Omega=\{x\in\mathbb{R}^n:g_j(x)\geq0,\;j=0,\ldots,p,\;\underline{x}_k\leq x_k\leq \overline{x}_k,\;k=0,\ldots,n\},
\end{align*}
where $f_i:\mathbb{R}^n\rightarrow \mathbb{R}$ ($i=1,\ldots,m$) are Lipschitz continuous, and $g_j:\mathbb{R}^n\rightarrow \mathbb{R}$ ($j=0,\ldots,p$) are continuous. If we allow $j=0$, the set $\Omega$ is referred to as a box constraint. In this case, we call $\Omega$ a \emph{box} with the midpoint $m(\Omega)=(\frac{\underline{x}_1+\overline{x}_1}{2},\ldots,\frac{\underline{x}_n+\overline{x}_n}{2})^T$ and the width $\omega(\Omega)=\|\overline{x}-\underline{x}\|$, where $\|\cdot\|$ denotes the Euclidean norm. For a feasible point $x\in\Omega$, the objective vector $F(x)\in\mathbb{R}^m$ is said to be the image of $x$, while $x$ is called the preimage of $F(x)$.

The concept of Pareto dominance relation is used to compare any two points $x,y\in\Omega$ and is defined by
\begin{align*}
x\;weakly\;dominates\;y\;&\Longleftrightarrow\;F(x)\leqq F(y)\;\Longleftrightarrow\;F(y)-F(x)\in \mathbb{R}^m_+,\\
x\;dominates\;y\;\qquad&\Longleftrightarrow\;F(x)\leq F(y)\;\Longleftrightarrow\;F(y)-F(x)\in \mathbb{R}^m_+\backslash\{0\},\\
x\;strictly\;dominates\;y&\Longleftrightarrow\;F(x)< F(y)\;\Longleftrightarrow\;F(y)-F(x)\in {\rm int}\mathbb{R}^m_+.
\end{align*}
where $\mathbb{R}^m_+=\{z\in\mathbb{R}^m:z\geq0\}$ and \emph{int} denotes the interior. When $F(x)\nleq F(y)$, $F(y)\nleq F(x)$ and $F(y)\neq F(x)$, we say that $x$ is \emph{indifferent} to $y$ ($x\sim y$). We can define similar terms in the objective space. A set $N\subseteq\mathbb{R}^m$ is called a \emph{nondominated set} if for any $z^1,z^2\in N$ we have $z^1\sim z^2$.

A point $x^*\in\Omega$ is said to be a \emph{Pareto solution} for (MOP) if there does not exist any $x\in\Omega$ such that $F(x)\leq F(x^*)$. The set of all Pareto solutions is called the \emph{Pareto set} and is denoted by $X^*$. The image of Pareto set under the mapping $F$ is called the \emph{Pareto front}.

The aim of an approximation algorithm is used to found an $\varepsilon$-efficient solution of problem (\ref{MOP}), which is defined next. Let $e$ denote the $m$-dimensional all-ones vector $(1,\ldots,1)^T\in\mathbb{R}^m$.

\begin{definition}\cite{ref13}\label{eep}
Let $\varepsilon\geq0$ be given A point $\bar{x}\in \Omega$ is an $\varepsilon$-efficient solution of problem (\ref{MOP}) if there does not exists another $x\in \Omega$ with $F(x)\leq F(\bar{x})-\varepsilon e$.
\end{definition}

In order to obtain a single solution preferred by the decision maker, all Pareto solutions must be put in a complete order. This is why we need the involvement of preference information. A common way of expressing preference information is to specify aspiration levels for all objective function values, which constitute the components of the \emph{reference point}. In general, a reference point is said to be \emph{feasible} if its preimage is a feasible point in variable space; otherwise, the reference point is said to be \emph{infeasible}.

The reference point is usually used in \emph{achievement scalarizing functions} (ASFs) which can convert problem (\ref{MOP}) into a single-objective optimization problem with reference points. The solution of the single-objective optimization problem is referred to as \emph{the most preferred solution} of problem (\ref{MOP}).

One of the most used ASF called \emph{augmented weighted achievement function} is proposed by Wierzbicki\cite{ref22}, which is related to augmented weighted Tchebycheff metric. For a given reference point $r\in \mathbb{R}^m$, a weight vector $w\in \mathbb{R}^m_{++}$ and an augmentation coefficient $\rho>0$, the augmented weighted achievement function is given by
\begin{align}
s(x,r,w)=\max\limits_{i=1,\ldots,m}w_i(f_i(x)-r_i)+\rho\sum_{i=1}^{m}w_i(f_i(x)-r_i)\label{ASF1}
\end{align}
which must be minimized over $\Omega$:
\begin{align}
\qquad\min\limits_{x\in\Omega}\quad s(x,r,w)\label{sxr}
\end{align}
The coefficient $\rho>0$ must be a small positive value (i.e. $\rho=10^{-6}$) which ensures that the most preferred solution is a (properly) Pareto solution of problem (\ref{MOP}) for any reference point in $\mathbb{R}^m$ (see e.g.\cite{ref14}). Noting that, when the ASF given in \eqref{ASF1} is minimized over $\Omega$ using a strictly positive weight, in practice, the reference point is projected onto the Pareto front in the direction defined by the inverses of the weight used \cite{ref19,ref14}. Therefore, an augmented weighted achievement function for an objective vector $z\in\mathbb{R}^m$ can be defined as follows:
\begin{align}
\bar{s}(z,r,w)=\max\limits_{i=1,\ldots,m}\frac{1}{w_i}(z_i-r_i)+\rho\sum_{i=1}^{m}\frac{1}{w_i}(z_i-r_i).\label{ASF2}
\end{align}

We use $d(a,b)=\|a-b\|$ to quantify the distance between two points $a$ and $b$, and the distance between the point $a$ and a non-empty finite set $B$ is defined as $d(a,B):=\min_{b\in B}\|a-b\|.$ Let $A$ be another non-empty finite set, we define the directed Hausdorff distance from $A$ to $B$ by
\begin{align*}
d_h(A,B):=\max_{a\in A}\{\min_{b\in B}\|a-b\|\}.
\end{align*}


\section{Reference-point-based branch and bound algorithm}

In the new algorithm to be introduced, we treat the branch and bound method as the framework. For this reason, we describe the solution process of a basic branch and bound algorithm for MOPs in more detail.
\subsection{New discarding test}
A branch and bound algorithm systematically searches for an approximation of the whole Pareto set by means of a tree search in the variable space. By pruning off nodes (subboxes) in the tree that are provably suboptimal, it is possible to limit the tree search and thus avoid exhaustive enumeration. At each iteration, the algorithm bisects a box perpendicularly to a direction of maximum width, resulting in two mutually exclusive subboxes (\emph{branching}). Each subbox corresponds to a subproblem of problem (\ref{MOP}), i.e., minimizing the objective function $F$ over one subbox. The upper and lower bounds for the Pareto front corresponding to one subbox are calculated to check if the subbox contains any Pareto solution of problem (\ref{MOP}). If not, the subbox is removed, which is referred as to the \emph{pruning}, while the calculation of the lower and upper bounds is known as \emph{bounding}. Given a box $B$, the source of the upper bounds $u$ with respect to $B$ is the image of the midpoint or the vertexes of $B$. Each component of the lower bound $l=(l_1,\ldots,l_m)^T$ with respect to $B$ can be calculated as follows \cite{ref12}:
\begin{align}
l_i = f_i(m(B))-\frac{L_i}{2}\omega(B), i=1,\ldots,m,\label{lb}
\end{align}
where $L_i$ is the Lipschitz constant of $f_i$. The pruning can be achieved by a \emph{discarding test}. A common type of discarding test is based on the Pareto dominance relation:\\
\noindent{\bf Preference-free discarding test} Let problem (\ref{MOP}) be given, let a subbox $B\in\Omega$ with its lower bound $l(B)\in\mathbb{R}^m$. Further, let $\mathcal{U}^{nds}$ be the nondominated upper bound set of problem (\ref{MOP}). Then, the box $B$ will be discarded if there exists $u\in \mathcal{U}^{nds}$ such that $u\leq l$.

A common point in many branch and bound algorithms \cite{ref6,ref7,ref17,ref23,ref25,ref26} is the absence of preference information in the solution process. As mentioned above, branch and bound algorithms try to generate an entirely approximation of the Pareto set assuming that any nondominated solution is desirable. But this is not always the case in a real situation where different parts of the Pareto set could be more preferred than some others, and some parts could not be interesting at all. From our point of view, this lack of preference information produces shortcomings in two ways:

\begin{itemize}
  \item Computational resources are wasted in exploring undesired subboxes; and
  \item The decision maker may be unable to find the most preferred solution among a huge number of alternatives when the problem has more than three objectives, because the visualization of the Pareto fronts for many-objective problems is not as illustrative or intuitive as for two objectives.
\end{itemize}

In order to avoid the above-mentioned shortcomings, preference information must be used in the solution process of branch and bound algorithms. In this way, the algorithms can avoid exploring undesired subboxes, and decision maker uses the preference information to guide the search towards the preferred solution.

Here we integrate preference information given by the decision maker in the form of reference points into the branch and bound algorithm such that subboxes gradually concentrate in the neighborhood of those solutions that obey the preference as well as possible. As noticed in Ruiz et al. \cite{ref19}, the reference points and achievement scalarizing functions are fundamentally related to each other. Hence we use the ASF given in \eqref{ASF2} embedded into the preference-free discarding test. In the following, we give the preference-based discarding test.

\noindent{\bf Preference-based discarding test} Let problem (\ref{MOP}) be given, let a subbox $B\in\Omega$ with its lower bound $l(B)\in\mathbb{R}^m$, and let $\mathcal{U}^{p}$ be a preferred upper bound set of problem (\ref{MOP}). Further, let a reference point $r\in\mathbb{R}^m$, a weight vector $w\in\mathbb{R}^m_{++}$ and an algorithmic parameter $\sigma>0$ be given. Then, the box $B$ will be discarded if one of the following discarding conditions holds true:
\begin{enumerate}[a)]
  \item there exists a point $u\in \mathcal{U}^{p}$ such that $\bar{s}(u,r,w)\leq\bar{s}(l(B),r,w)-\sigma$;\label{cond1}
  \item there exists a point $u\in \mathcal{U}^{p}$ such that $u\leq l(B)$.\label{cond2}
\end{enumerate}

Now we state the correctness of the preference-based discarding test.

\begin{lemma}\label{lemma1}
Let a subbox $B\in\Omega$ and its lower bound $l(B)\in\mathbb{R}^m$ be given, let $\mathcal{U}^{p}$ be a preferred upper bound set of problem (\ref{MOP}) with respect to a given reference point $r\in\mathbb{R}^m$ and weight vector $w\in\mathbb{R}^m$.
\begin{enumerate}[\rm 1)]
  \item If the discarding condition \ref{cond1}) is satisfied, then $B$ does not contain the most preferred solution of problem (\ref{MOP});
  \item If the discarding condition \ref{cond2}) is satisfied, then $B$ does not contain any Pareto solution of problem (\ref{MOP});
  \item The box containing the most preferred solution will never be discarded.
\end{enumerate}
\end{lemma}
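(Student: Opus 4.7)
The plan is to handle the three items separately, leveraging two key ingredients: the coordinatewise monotonicity of the augmented weighted achievement function $\bar{s}(\cdot, r, w)$ in \eqref{ASF2}, and the lower-bound property $F(x) \geq l(B)$ componentwise for every $x \in B$. I also use tacitly that each $u \in \mathcal{U}^p$ is the image $F(x^u)$ of some feasible $x^u \in \Omega$, so that $\bar{s}(u,r,w) = s(x^u, r, w)$; this is the standard branch-and-bound convention that upper bounds come from sampled feasible points.

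For part 1, I would fix an arbitrary $x \in B$. Since both the $\max$ and the weighted sum in \eqref{ASF2} are nondecreasing in each $z_i$ (because $w_i, \rho > 0$), the bound $F(x) \geq l(B)$ gives $s(x, r, w) = \bar{s}(F(x), r, w) \geq \bar{s}(l(B), r, w)$. Combined with discarding condition a), this chains to $s(x^u, r, w) = \bar{s}(u, r, w) \leq \bar{s}(l(B), r, w) - \sigma < s(x, r, w)$. Since $x$ was arbitrary in $B$, no point of $B$ can minimize $s$ over $\Omega$, so $B$ contains no most preferred solution. Part 2 is the standard Pareto-dominance argument: if $u \leq l(B)$ in the Pareto sense, then $F(x^u) = u \leq l(B) \leq F(x)$ for every $x \in B$, so $x^u$ dominates $x$ and $x$ is not Pareto optimal.

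I would prove part 3 by contradiction: suppose $B$ contains the most preferred solution $x^*$ and is nonetheless discarded. If discarding is triggered by condition b), the argument of part 2 applied to $x^*$ gives that $x^u$ dominates $x^*$; strict monotonicity of $\bar{s}$ (which follows from the augmentation term $\rho \sum_i (1/w_i)(z_i - r_i)$, since the $\max$ alone is only weakly monotone) then yields $s(x^u, r, w) < s(x^*, r, w)$, contradicting $x^*$ being a minimizer of $s$ over $\Omega$. If discarding is triggered by condition a), I chain $s(x^u, r, w) = \bar{s}(u, r, w) \leq \bar{s}(l(B), r, w) - \sigma \leq \bar{s}(F(x^*), r, w) - \sigma = s(x^*, r, w) - \sigma < s(x^*, r, w)$, again contradicting optimality of $x^*$.

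The main obstacle is essentially one of careful bookkeeping rather than deep analysis: ensuring that every element $u \in \mathcal{U}^p$ is identified with an attained objective value at some feasible $x^u$, and that the strict inequality needed in part 3 for the Pareto-dominance case is actually available. The latter is the only spot that is easy to get wrong, because it quietly forces the use of the augmentation term (and hence the hypothesis $\rho > 0$) rather than the $\max$ term of $\bar{s}$.
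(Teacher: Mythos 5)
Your proof is correct and follows essentially the same route as the paper: monotonicity of $\bar{s}$ combined with $l(B)\leqq F(x)$ for all $x\in B$ in part 1, the standard dominance argument for part 2, and for part 3 the observation that neither discarding condition can fire on a box containing the minimizer of the augmented ASF. You are in fact slightly more careful than the paper at two points: you obtain the strict inequality in part 1 from the $\sigma>0$ slack together with weak monotonicity (the paper instead asserts, without justification, a strict componentwise inequality between $l(B)$ and $F(x)$), and in part 3 you explicitly derive $s(x^u,r,w)<s(x^*,r,w)$ from the $\rho$-augmentation term rather than just citing that the most preferred solution is a Pareto solution; the silent identification of $s(x,r,w)$ with $\bar{s}(F(x),r,w)$ despite the inverted weights in \eqref{ASF1} versus \eqref{ASF2} is an inconsistency you inherit from the paper, not one you introduce.
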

\begin{proof}
Because $l(B)$ is the lower bound with respect to $B$, it follows that $l(B)\leq F(x)$ for all $x\in B$. Then for all $x\in B$, we have $l(B)_i-r_i\leq F(x)_i-r_i$ for all $i=1,\ldots,m$ with the strict inequality holding for at least one index $j$, and further $\bar{s}(l(B),r,w)< s(x,r,w)$. If condition \ref{cond1}) is satisfied, then there exists a upper bound $u\in \mathcal{U}^{p}$ such that $\bar{s}(u,r,w)<\bar{s}(l(B),r,w)-\sigma<\bar{s}(l(B),r,w)$. Thus we have $\bar{s}(u,r,w)< s(x,r,w)$ for all $x\in B$. The conclusion 2) is trivial due to the no-preference discarding test. The conclusion 3) holds because the most preferred solution is a Pareto solution of problem (\ref{MOP}).\qed
\end{proof}

Lemma \ref{lemma1} shows that the preference-based discarding test has ability to remove more boxes that do not contain the most preferred solution by using the ASF. And further, the discarding pressure can be controlled by the predefined parameter $\sigma$, i.e. a larger $\sigma$ indicates that more subboxes are retained, leading to a wider region of interest.

In fact, the introduction of the parameter $\sigma$ is inspired by \emph{r-dominance} \cite{ref20}. Although both of approaches use the parameter $\sigma$ to control the selection (discarding) pressure in order to obtain the region of interest, their main difference lies in the definition of region of interest. In r-dominance, the region of interest is defined by the objective vectors according to their Euclidean distance to the reference point. Whereas the region of interest obtained in the new discarding test is constructed by the Pareto solutions near the most preferred solution. Furthermore, if we replace the ASF in the discarding condition a) with the Euclidean distance, then the resulting discarding test is unable to handle the feasible reference point.

Sometimes, the decision maker will provide multiple reference points in order to obtain several different regions. Therefore, we extend the above discarding test to a version which can handle multiple reference points simultaneously.

\noindent{\bf Discarding test for multiple reference points} Let problem (\ref{MOP}) be given, let a subbox $B\in\Omega$ with its lower bound $l(B)\in\mathbb{R}^m$. Let $\mathcal{U}^{p}$ be a preferred upper bound set of problem (\ref{MOP}), $\mathcal{P}=(r,w)$ a list of preference information, and $\sigma$ a predefined parameter. The box $B$ can be discarded if one of the following discarding conditions holds true:
\begin{enumerate}[a)]
  \item for each element $(r,w)\in\mathcal{P}$, there exists a point $u\in \mathcal{U}^{p}$ such that $\bar{s}(u,r,w)\leq\bar{s}(l(B),r,w)-\sigma$;\label{conda}
  \item there exists a point $u\in \mathcal{U}^{p}$ such that $u\leq l(B)$.\label{condb}
\end{enumerate}

Algorithm 1 gives an implementation of the second discarding test, where the flag $\mathcal{D}$ stands for decision to discard the box after the algorithm. Before performing the discarding test for the box $B$, a set of preferred solutions $\mathcal{U}^p$ should be given, which can be obtained by means of calculating the upper bound set for the preferred boxes at the previous iteration. In the for-loop from line 2, the decision to remove $B$ with respect to every preference in $\mathcal{P}$ is stored in the list $D$. The case where there is a flag equal to 0 in $D$ implies that the box should not be discarded under $\mathcal{P}$.

\begin{algorithm}[H]\label{alg1}
\caption{\texttt{DT}$(B, l(B),\mathcal{U}^{p},\mathcal{P})$}
  \SetKwInOut{Input}{Input}\SetKwInOut{Output}{Output}
  \Input{A subbox $B$, the lower bound $l(B)$, a preferred upper bound set $\mathcal{U}^{p}$, a list of preference information $\mathcal{P}$, a predefined parameter $\sigma$;}
  \Output{The flag $\mathcal{D}$;}
  $D\leftarrow \emptyset$, $\mathcal{D}\leftarrow0$\;
  \ForEach{$(r,w)\in\mathcal{P}$}{
  \lIf{there exists $u\in\mathcal{U}^{p}$ such that $s_2(u,r,w)< s_2(l(B),r,w)-\sigma$}{$D\leftarrow D\cup \{1\}$\;}
  \lElse{$D\leftarrow D\cup \{0\}$\;}
  }
  \If{all flags in $D$ are equal to \rm{1} {\bf or} there exists $u\in\mathcal{U}^{p}$ such that $u\leq l(B)$}{$\mathcal{D}\leftarrow1$.}
\end{algorithm}

\subsection{The complete algorithm}
Having now the new discarding test, we can present the reference-point-based branch and bound algorithm (RBB). As previously mentioned, the main purpose of RBB is to approximate the region(s) of interest of the Pareto front according to the reference point(s) provided by the decision maker. For two given precision parameters $\varepsilon$ and $\delta$, RBB is able to generate a set of $\varepsilon$-efficient solutions distributed on the region(s) of interest.

The pseudocode of RBB is given in Algorithm 2. Considering that Pareto solutions can be distributed in arbitrary boxes (nodes) at the same depth of the branch and bound tree, \emph{the breadth first search strategy} is more suitable for RBB than other strategies, since the breadth first search strategy has the advantage of always finding an optimal solution that is closest to the root of the tree. In order to shorten the computation time, in line 4, RBB constructs the new box collection $\mathcal{B}_{k}$ by simultaneously bisecting all the boxes stored in $\mathcal{B}_{k-1}$ to ensure that the subsequent breadth first search can be parallelized. In addition, many boxes in $\mathcal{B}_{k}$ which do not contain any feasible points should be filtered out. RBB employs the feasibility test suggested in \cite{ref7}.

The next steps of RBB consist of two phases. Phase one (from line 7 to line 18) aims to roughly identify the preferred subboxes that may contain $\varepsilon$-efficient solutions by using the new discarding test; phase two (from line 19 to line 33) aims to improve the solution quality of the preferred boxes.

\begin{algorithm}\label{RBB}
\caption{\texttt{Reference-Point-based Branch and Bound Algorithm}}
  \SetKwInOut{Input}{Input}\SetKwInOut{Output}{Output}
  \SetKwFunction{DT}{DT}
  \SetKwFunction{MOEA}{MOEA}
  \Input{problem (\ref{MOP}), a list of preference information $\mathcal{P}$, $\varepsilon>0$, $\delta>0$;}
    \Output{$\mathcal{B}_{k}$, $\mathcal{L}^{p}$, $\mathcal{U}^{p}$, $\mathcal{X}$;}
    $k\leftarrow1$, $\mathcal{B}_0\leftarrow\Omega$, $\omega_{k-1}\leftarrow \omega(\Omega)$, $d\leftarrow10^6$\;
  \While{$d>\varepsilon$ or $\omega_{k-1}>\delta$}{
  $\mathcal{I}\leftarrow\emptyset$, $\mathcal{L}^{p}\leftarrow\emptyset$, $\mathcal{U}^{p}\leftarrow \emptyset$, $\mathcal{X}^{p}\leftarrow \emptyset$\;
  Construct $\mathcal{B}_{k}$ by bisecting all boxes in $\mathcal{B}_{k-1}$\;
  $\omega_k\leftarrow \max\{\omega(B):B\in\mathcal{B}_{k}\}$\;
  Update $\mathcal{B}_{k}$ by the feasibility test suggested in \cite{ref7}\;
  \ForEach{$B\in\mathcal{B}_{k}$}{
  Treat the image of the midpoint of $B$ as the upper bound $u(B)$\;
  Calculate for $B$ its lower bound $l(B)$ by (\ref{lb})\;
  $\mathcal{I}\leftarrow\mathcal{I}\cup\{(l(B);u(B);B)\}$\;}
  Extract the lower bound set $\mathcal{L}$ from $\mathcal{I}$\;
  Find a nondominated lower bound set $\mathcal{L}^{p}$ from $\mathcal{L}$\;
  Update $\mathcal{U}^p$ and $\mathcal{B}^p$ according to $\mathcal{L}^{p}$\;
  \ForEach{$B\in\mathcal{B}_{k}$}{
  $\mathcal{D}\leftarrow \DT(B,l(B),\mathcal{U}^{p},\mathcal{P})$\;
  \lIf{$\mathcal{D}=1$}
  {$\mathcal{B}_{k}\leftarrow \mathcal{B}_{k}\backslash B$\;}
  }
  \ForEach{$B\in\mathcal{B}^p$}
  {Obtain an upper bound set $\mathcal{U}$ and a solution set $\mathcal{X}$ by applying \MOEA to $B$\;
  Update $\mathcal{U}^p$ by $\mathcal{U}$ and $\mathcal{X}^p$ by $\mathcal{X}$\;}

  \If{$|\mathcal{U}^p|<M$}{  \ForEach{$B\in\mathcal{B}_k$}
  {Obtain an upper bound set $\mathcal{U}$ and a solution set $\mathcal{X}$ by applying \MOEA to $B$\;
  Update $\mathcal{U}^p$ by $\mathcal{U}$ and $\mathcal{X}^p$ by $\mathcal{X}$\;}}

  \ForEach{$B\in\mathcal{B}_{k}$}{
  $\mathcal{D}\leftarrow \DT(B,l(B),\mathcal{U}^{p},\mathcal{P})$\;
  \lIf{$\mathcal{D}=1$}
  {$\mathcal{B}_{k}\leftarrow \mathcal{B}_{k}\backslash B$\;}
  }

  $d\leftarrow d_h(\mathcal{U}^{p},\mathcal{L}^{p})$, $k\leftarrow k+1$.
  }
\end{algorithm}

In phase one, the first for-loop from line 7 calculates for each box $B\in\mathcal{B}_k$ its lower bound $l(B)$ and upper bound $u(B)$, where $l(B)$ can be calculated by \eqref{lb} and $u(B)$ is the image of the midpoint of $B$. Then, the pair $(l(B);u(B);B)$ is stored in the information list $\mathcal{I}$ in order to track the data. From line 12 to line 14, a nondominated lower bound set $\mathcal{L}^p$ is selected from the lower bound set $\mathcal{L}$ which are stored in $\mathcal{I}$; and then, for each lower bound in $\mathcal{L}^p$, its corresponding upper bound and box are inserted into the preferred upper bound set $\mathcal{U}^p$ and the preferred box collection $\mathcal{B}^p$, respectively. The second for-loop of RBB checks whether the box in $\mathcal{B}_k$ can be removed by the preference-based discarding test.

In phase two, an MOEA with a small initial population and a few generations is applied to each subbox in $\mathcal{B}^p$ in order to obtain the upper bound set $\mathcal{U}$ and the solution set $\mathcal{X}$. Then the upper bound set $\mathcal{U}$ is used to update the preferred upper bound set $\mathcal{U}^p$, i.e. for each upper bound $u\in\mathcal{U}$ we check if $u$ is dominated by any other upper bound in $\mathcal{U}^p$. In this case $u$ is not included in $\mathcal{U}^p$; otherwise, $u$ is added to $\mathcal{U}^p$ and all upper bounds dominated by $u$ are removed. the preferred solution set $\mathcal{X}^p$ changes according to $\mathcal{U}^p$. Furthermore, a constraint handling technique \cite{ref11} is used in the MOEA to improve the number of feasible upper bounds. If the number of upper bounds in $\mathcal{U}^p$ is less than $M$ ($M$ is the minimum number of candidate solutions required by the decision maker, here we set $M=100$), the MOEA is applied to each subbox in $\mathcal{B}_k$ in order to obtain more feasible upper bounds.

Additionally, we incorporate an objective normalization technique into the achievement scalarizing function to ensure that the resulting region of interest correctly expresses the trade-off among disparately scaled objectives. The ASF given in \eqref{ASF2} is replaced by
\begin{align*}
\max\limits_{i=1,\ldots,m}\frac{1}{w_i}\Big(\frac{z_i-r_i}{\tilde{z}^{nad}_i-\tilde{z}^*_i}\Big)+\rho\sum_{i=1}^{m}\frac{1}{w_i}\Big(\frac{z_i-r_i}{\tilde{z}^{nad}_i-\tilde{z}^*_i}\Big),
\end{align*}
where $\tilde{z}^{nad}_i$ are the largest value of $f_i$ in the current upper bounds, and $\tilde{z}^*_i$ are the smallest value of $f_i$ in the current lower bounds.

The initial $\tilde{z}^{nad}$ and $\tilde{z}^*$ are estimated by the natural interval extension \cite{ref16}. In order to update $\tilde{z}^*$, at each iteration the subbox with the smallest value of $f_i$ in the current lower bounds remains in the box collection, and in line 13 each component of $\tilde{z}^*$ is updated by the smallest value of $f_i$ in the current lower bounds. At the same time, each component of $\tilde{z}^{nad}$ is updated by the largest value of $f_i$ in the current upper bounds. The update of $\tilde{z}^{nad}$ can be done in lines 14, 21 and 26.

\section{Convergence results}
First, we state that the most preferred solution of problem (\ref{MOP}) is always contained in the box collection generated by RBB.
\begin{lemma}
Let $\{\mathcal{B}_k\}_{k\in\mathbb{N}}$ be a sequence of box collections generated by RBB. Then, for the most preferred solution $x^p$ of problem (\ref{MOP}), we have
\begin{align*}
x^p\subset\cdots\subset \mathcal{B}_k\subset\cdots\subset\mathcal{B}_1 \subset \mathcal{B}_0.
\end{align*}
\end{lemma}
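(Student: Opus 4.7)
The plan is to prove the statement by induction on the iteration index $k$. The base case $k=0$ is immediate: since $x^p$ is the most preferred solution and, because the augmented weighted achievement function with $\rho>0$ delivers a (properly) Pareto solution, $x^p$ is in particular feasible, so $x^p\in\Omega=\mathcal{B}_0$.

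For the inductive step, I would assume $x^p$ lies in some box $B\in\mathcal{B}_{k-1}$ and track what happens to $B$ during iteration $k$ of RBB. Line~4 bisects $B$ perpendicular to a direction of maximum width into two closed subboxes $B'$ and $B''$ whose union is $B$; therefore at least one of them, say $B'$, still contains $x^p$. Next, the feasibility test applied in line~6 can only remove boxes that provably contain no feasible point; since $x^p\in B'$ and $x^p$ is feasible, $B'$ survives this filter. It then remains to show that neither of the two invocations of \texttt{DT} (lines~15 and~27) removes $B'$.

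For the discarding test, I would invoke Lemma~\ref{lemma1}. The subtle point is that Algorithm~1 handles a \emph{list} $\mathcal{P}$ of preferences, whereas Lemma~\ref{lemma1} was formulated for a single reference point; I would reconcile this as follows. Let $(r^*,w^*)\in\mathcal{P}$ be the preference for which $x^p$ is the most preferred solution. By (the single-reference argument inside) Lemma~\ref{lemma1}(1), the inequality $\bar{s}(u,r^*,w^*)\leq\bar{s}(l(B'),r^*,w^*)-\sigma$ cannot hold for any $u\in\mathcal{U}^p$, since otherwise $\bar{s}(u,r^*,w^*)<s(x^p,r^*,w^*)$ would contradict the optimality of $x^p$ for problem~\eqref{sxr}. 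Hence the flag appended to $D$ for $(r^*,w^*)$ is $0$, and the conjunctive criterion ``all flags equal to~$1$'' in Algorithm~1 fails. Similarly, condition~b) cannot hold by Lemma~\ref{lemma1}(2), since $x^p\in B'$ is itself a Pareto solution. Consequently $\mathcal{D}=0$ is returned for $B'$ in both calls, and $B'\in\mathcal{B}_k$, which closes the induction.

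The main obstacle is this bridging from the single-reference-point Lemma~\ref{lemma1} to the multi-reference-point Algorithm~1: one has to identify the distinguished preference in $\mathcal{P}$ that ``protects'' the box containing $x^p$, and argue that its associated flag being $0$ suffices to override the discarding criterion. The remaining pieces (bisection preserving containment, feasibility test not removing a box with a feasible point, and the two MOEA steps in lines~20--25 and~22--25 not affecting $\mathcal{B}_k$ directly) are essentially bookkeeping.
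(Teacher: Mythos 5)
Your proof is correct and takes essentially the same route as the paper, whose entire argument is the one-line remark that the claim ``is guaranteed by Lemma~\ref{lemma1} and the way $\mathcal{B}_k$ is constructed''; you have simply made the induction, the bisection/feasibility bookkeeping, and the single-to-multiple-reference-point bridging explicit. The multi-reference-point subtlety you isolate (the distinguished preference whose flag is $0$ blocking the conjunctive discarding criterion in Algorithm~1) is a genuine gap in the paper's terse justification that your write-up correctly fills.
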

\begin{proof}
This conclusion is guaranteed by Lemma \ref{lemma1} and the way $\mathcal{B}_k$ is constructed.\qed
\end{proof}

Next we have to verify that RBB is finite.
\begin{theorem}
Let the predefined parameters $\varepsilon>0$ and $\delta>0$ be given, RBB terminates.
\end{theorem}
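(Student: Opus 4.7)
The plan is to show that both clauses of the while-loop guard, $d > \varepsilon$ and $\omega_{k-1} > \delta$, eventually fail, so that the loop is executed only finitely often. The width clause is the easier half: since each iteration constructs $\mathcal{B}_k$ from $\mathcal{B}_{k-1}$ by bisecting every box perpendicular to a direction of maximum width, the maximum coordinate width across the boxes in $\mathcal{B}_k$ is halved at least once every $n$ iterations. A straightforward induction then yields a geometric decay of the form $\omega_k \leq c_1 \cdot 2^{-\lfloor k/n \rfloor}$ for a constant $c_1$ depending only on $\Omega$, so $\omega_{k-1} \leq \delta$ holds for all $k$ past an explicit threshold.

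For the distance clause, I would combine the Lipschitz continuity of each $f_i$ with the lower-bound formula \eqref{lb}. For any $x$ in a box $B$ the triangle inequality gives
\[
|F_i(x) - l_i(B)| \;\leq\; |F_i(x) - f_i(m(B))| + \tfrac{L_i}{2}\omega(B) \;\leq\; L_i \,\omega(B),
\]
whence $\|F(x) - l(B)\| \leq \omega(B)\sqrt{\sum_{i=1}^m L_i^2}$. Every $u \in \mathcal{U}^p$ is of the form $F(x)$ for some $x$ lying in a preferred box $B \in \mathcal{B}^p$, whose lower bound $l(B)$ belongs to $\mathcal{L}^p$. From this I would conclude $d = d_h(\mathcal{U}^p,\mathcal{L}^p) \leq C\,\omega_k$ with $C = \sqrt{\sum_i L_i^2}$, so that $d \leq \varepsilon$ as soon as $\omega_k \leq \varepsilon/C$. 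Combining the two estimates, the loop must terminate after at most roughly $\max\{\,n\log_2(\omega_0/\delta),\, n\log_2(C\omega_0/\varepsilon)\,\}$ iterations.

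The hardest point is justifying the last inequality, because the correspondence between $\mathcal{U}^p$ and $\mathcal{L}^p$ used above is not entirely clean. If some $u = F(x) \in \mathcal{U}^p$ originates from a box $B$ whose lower bound $l(B)$ has been dominated by another $l^* \in \mathcal{L}^p$, then componentwise $l^* \leq l(B) \leq u$, and $\|u - l^*\|$ can in principle exceed $\|u - l(B)\|$. This concern is most acute for the optional step that, when $|\mathcal{U}^p| < M$, augments $\mathcal{U}^p$ by applying the MOEA to arbitrary boxes in $\mathcal{B}_k$. I would handle it by arguing that for $k$ large enough the initial $|\mathcal{U}^p|$ produced by the first MOEA pass over $\mathcal{B}^p$ already exceeds $M$, so that the optional branch is never entered and every retained $u$ traces back to a box $B \in \mathcal{B}^p$ whose lower bound sits in $\mathcal{L}^p$, after which the bound above applies directly.
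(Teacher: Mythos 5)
Your argument follows essentially the same route as the paper: the width $\omega_k$ decays geometrically because every box is bisected perpendicular to a side of maximal width, and the Lipschitz lower-bound formula \eqref{lb} gives $d_h(\mathcal{U}^p,\mathcal{L}^p)\leq C\,\omega_k$ for a constant built from the $L_i$, so both clauses of the loop guard eventually fail. The paper's own proof simply asserts the clean pairing $d(u,l)=\frac{1}{2}\omega_k\|L\|$ for each $u\in\mathcal{U}^p$ (valid when $u$ is the image of the midpoint of a box whose lower bound lies in $\mathcal{L}^p$) and does not engage at all with the complication you raise in your final paragraph, namely that MOEA-generated upper bounds may originate from boxes whose lower bounds were dominated out of $\mathcal{L}^p$; so on that point you are being more careful than the source. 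Note, however, that your proposed resolution --- that for $k$ large the first MOEA pass over $\mathcal{B}^p$ already yields $|\mathcal{U}^p|\geq M$ so the second pass over all of $\mathcal{B}_k$ is skipped --- is itself only asserted, not proven (nothing in the algorithm guarantees $|\mathcal{B}^p|$ or the MOEA output grows past $M$), and even the first pass can replace a midpoint upper bound of a preferred box by a dominating image from a different preferred box, which your $\|u-l(B)\|\leq\omega(B)\sqrt{\sum_i L_i^2}$ bound does cover but the paper's equality does not. A cleaner way to close the loop, if you want a fully rigorous statement, is to prove the bound only for the midpoint-based upper bounds attached to $\mathcal{L}^p$ and observe that dominance-based replacement can only move $u$ closer (componentwise) to the corresponding $l$ when the replacing image also lies in a box of width at most $\omega_k$, which keeps the distance within the same $O(\omega_k)$ envelope.
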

\begin{proof}
Because we divide all boxes perpendicular to a side with maximal width, $\omega_k$ decreases among the sequence of box collections, i.e., $\omega_{k} > \omega_{k+1}$ for every $k$ and converges to 0. Therefore, for a given $\delta>0$, there must exist a iteration count $\tilde{k}>0$ such that $\omega_{\tilde{k}}\leq\delta$.

Assume we use (\ref{lb}) to calculate lower bound. According to the way $\mathcal{U}^{p}$ is constructed, for every $u\in\mathcal{U}^{p}$, there exists a lower bound $l\in\mathcal{L}^{p}$, such that
\begin{align*}
d(u,\mathcal{L})\leq d(u,l)= \frac{1}{2}\omega_k\|L\|,
\end{align*}
where $L=(L_1,\ldots,L_m)^T$ consisting of the Lipschitz constants of objectives. Hence we have
\begin{align}
d_h(\mathcal{U}^{p},\mathcal{L}^{p}) = \frac{1}{2}\omega_k\|L\|.\label{eq1}
\end{align}
Due to the fact that $\omega_k$ converges to 0, it follows that for a given $\varepsilon>0$, there must exist a iteration count $\bar{k}>0$ such that $d_h(\mathcal{U}^{p},\mathcal{L}^{p})\leq\varepsilon$.\qed
\end{proof}

By following the same line of reasoning we can find the minimum total number of iterations for RBB to terminate.
\begin{theorem}\label{mti}
Let the predefined parameters $\varepsilon>0$ and $\delta>0$ be given. If $\Omega=\{x\in\mathbb{R}^n:0\leq x_k\leq 1,\;k=0,\ldots,n\}$
, then the minimum total number of iterations for RBB to terminate is
\begin{align*}
  I^{min}=\max\Big\{n\Big\lceil\log_2 \frac{\sqrt{n}}{\varepsilon}\Big\rceil,n\Big\lceil\log_2 \frac{\sqrt{n}\|L\|}{\delta}-1\Big\rceil\Big\}.
\end{align*}
\end{theorem}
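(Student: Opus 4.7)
The plan is to translate each clause of RBB's stopping rule into an explicit lower bound on the iteration counter by combining an exact description of $\omega_k$ on the unit cube with identity (\ref{eq1}) established in the preceding theorem.

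\textbf{Step 1 (closed form of $\omega_k$).} For $\Omega=[0,1]^n$ one has $\omega_0=\sqrt{n}$, and because RBB always bisects perpendicular to a side of maximum width, the bisections are forced to cycle through the $n$ coordinates in round-robin fashion: once a coordinate has been halved strictly more times than some other coordinate, its current side length is no longer maximal, so it cannot be chosen again until every other coordinate has caught up. I would formalise this balancing property by induction on the iteration index and conclude that after $nj$ iterations every coordinate has been halved exactly $j$ times, so
\[
\omega_{nj}=\sqrt{n\,(1/2^{j})^{2}}=\sqrt{n}/2^{j}.
\]
Since $\omega_k$ is non-increasing in $k$, the smallest $k$ for which one can guarantee $\omega_k\le t$ is attained at the next multiple of $n$, namely $n\lceil\log_2(\sqrt{n}/t)\rceil$.

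\textbf{Step 2 (translate each stopping condition).} Applied with $t$ equal to the width tolerance, Step~1 gives directly a bound of the form $n\lceil\log_2(\sqrt{n}/t)\rceil$. For the Hausdorff-distance clause, identity (\ref{eq1}) gives $d=\tfrac12\omega_k\|L\|$, so the requirement that $d$ drop below its tolerance is equivalent to $\omega_k\le 2t/\|L\|$; this produces
\[
n\bigl\lceil\log_2(\sqrt{n}\,\|L\|/(2t))\bigr\rceil=n\bigl\lceil\log_2(\sqrt{n}\,\|L\|/t)-1\bigr\rceil
\]
after a routine manipulation of the logarithm.

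\textbf{Step 3 (combine).} Since the while loop exits only when both clauses simultaneously fail, the total iteration count is the larger of the two bounds derived in Step~2, which is exactly the expression claimed for $I^{\min}$.

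The principal technical obstacle is Step~1: rigorously establishing the round-robin balancing property forced by the max-width rule. Without that lemma one would have to reason about arbitrary bisection orderings and the clean closed form $\omega_{nj}=\sqrt{n}/2^{j}$ would no longer be available. Once the balancing property is in place, Steps~2 and~3 amount to elementary algebra on ceilings and logarithms.
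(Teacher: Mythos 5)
Your proposal follows essentially the same route as the paper: both derive the round-robin closed form for $\omega_k$ on the unit cube (the paper simply states the per-coordinate widths via $K=\lfloor k/n\rfloor$ and $N=k\bmod n$ without proving the balancing property you flag as the main obstacle), translate each clause of the stopping rule through (\ref{eq1}), and take the maximum of the two resulting iteration counts. Your generic-tolerance bookkeeping is in fact cleaner than the paper's, whose proof swaps the roles of $\varepsilon$ and $\delta$ relative to the theorem statement in equations (\ref{eq2}) and (\ref{eq3}).
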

\begin{proof}
At the $k$-th iteration, the width of each box $B$ in $\mathcal{B}_k$ is
$$\omega(B)=\big(\underbrace{\frac{1}{2^{K+1}},\ldots,\frac{1}{2^{K+1}}}_{N},\underbrace{\frac{1}{2^{K}},\ldots,\frac{1}{2^{K}}}_{n-N}\big),$$
where $K=\lfloor k/n\rfloor$ and $N=k {\rm mod}(n)$. Then we have
\begin{align}
  \omega_k=\sqrt{N(\frac{1}{2^{K+1}})^2+(n-N)(\frac{1}{2^{K}})^2}=\sqrt{(n-\frac{3}{4}N)(\frac{1}{2^{K}})^2}\leq\frac{\sqrt{n}}{2^{K}}.\label{ineq1}
\end{align}
According to inequality (\ref{ineq1}), for a given $\delta>0$, we know $\omega_k\leq\delta$ if
\begin{align}
  k= n\Big\lceil\log_2 \frac{\sqrt{n}}{\delta}\Big\rceil.\label{eq2}
\end{align}

On the other hand, substituting (\ref{ineq1}) into (\ref{eq1}), it follows that
\begin{align*}
d_h(\mathcal{U}^{p},\mathcal{L}^{p}) = \frac{1}{2}\omega_k\|L\|\leq\frac{\sqrt{n}}{2^{K+1}}\|L\|,
\end{align*}
thus, for a given $\varepsilon>0$, we know $d_h(\mathcal{U}^{p},\mathcal{L}^{p})\leq\varepsilon$ if
\begin{align}
  k= n\Big\lceil\log_2 \frac{\sqrt{n}\|L\|}{\delta}-1\Big\rceil.\label{eq3}
\end{align}

By (\ref{eq2}) and (\ref{eq3}), we complete the proof. \qed
\end{proof}

Observe that the condition in Theorem \ref{mti} requiring the width of each side of the feasible region to be equal to 1 is mild, since we could normalize sides of the original feasible region, meanwhile, the Lipschitz constants of the objectives will also change. In practice, the total number of iterations is smaller than $I^{min}$ due to the inequality (\ref{ineq1}) and the update of the Lipschitz constants during the iterations.

In the following  we want to show $\mathcal{X}$ output by RBB is a set of $\varepsilon$-efficient solutions of problem (\ref{MOP}).

\begin{theorem}
Let $\mathcal{X}$ be the preferred solution set generated by RBB and $\mathcal{L}^p$ the nondominated lower bound set. Then $\tilde{x}\in\mathcal{X}$ is an $\varepsilon$-efficient solution of problem (\ref{MOP}).

\end{theorem}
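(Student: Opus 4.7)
The argument is by contradiction. Suppose some $\tilde{x}\in\mathcal{X}$ fails to be $\varepsilon$-efficient; Definition~\ref{eep} then supplies $x\in\Omega$ with $F(x)\le F(\tilde{x})-\varepsilon e$, understood componentwise with strict inequality in at least one coordinate. Write $\tilde{u}:=F(\tilde{x})\in\mathcal{U}^{p}$. The first task is to place $x$ inside an undiscarded box $B^{*}\in\mathcal{B}_{k}$ at termination: if $x$ instead lay in a box $B$ eliminated by discarding condition~b), the witness $u\in\mathcal{U}^{p}$ would satisfy $u\le l(B)\le F(x)\le\tilde{u}-\varepsilon e<\tilde{u}$ componentwise, so $u$ would Pareto-dominate $\tilde{u}$---contradicting the invariant that the algorithm maintains $\mathcal{U}^{p}$ as a nondominated upper-bound set throughout.

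Given $x\in B^{*}$, the Lipschitz lower bound \eqref{lb} yields $l(B^{*})\le F(x)\le\tilde{u}-\varepsilon e$ componentwise. Since $\mathcal{L}^{p}$ is the nondominated subset of $\mathcal{L}$, one extracts $l\in\mathcal{L}^{p}$ with $l\le l(B^{*})$, and hence $l_{i}\le\tilde{u}_{i}-\varepsilon$ for every $i$. The termination criterion $d_{h}(\mathcal{U}^{p},\mathcal{L}^{p})\le\varepsilon$ now supplies $\tilde{l}\in\mathcal{L}^{p}$ with $\|\tilde{u}-\tilde{l}\|\le\varepsilon$, which forces $\tilde{l}_{i}\ge\tilde{u}_{i}-\varepsilon\ge l_{i}$ for every $i$, i.e., $\tilde{l}\geqq l$. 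If $\tilde{l}\neq l$, then $l$ Pareto-dominates $\tilde{l}$ inside $\mathcal{L}^{p}$, contradicting its nondominance; if $\tilde{l}=l$, all intervening inequalities collapse to equalities, yielding $\tilde{l}=\tilde{u}-\varepsilon e$ and therefore $\|\tilde{u}-\tilde{l}\|=\varepsilon\sqrt{m}>\varepsilon$ for $m\ge 2$, which contradicts the Hausdorff bound.

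The step I expect to be the main obstacle is the discarded-by-condition-a) case, which removes boxes via ASF comparison rather than Pareto dominance; a box containing an $\varepsilon$-dominator of $\tilde{x}$ is a priori eligible for discard under a). Closing this case cleanly will require exploiting the componentwise monotonicity of $\bar{s}(\cdot,r,w)$ in the objective vector---so that $F(x)\le\tilde{u}-\varepsilon e$ forces a strict drop $\bar{s}(F(x),r,w)<\bar{s}(\tilde{u},r,w)$---and then tracking how the ASF witness $u\in\mathcal{U}^{p}_{k_{0}}$ active at the discard either survives, or is itself dominated by a survivor, inside the final $\mathcal{U}^{p}$, so that the contradiction propagates to the termination iteration.
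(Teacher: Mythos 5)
Your proof is incomplete: you explicitly leave open the case in which the competing point $x$ with $F(x)\le F(\tilde{x})-\varepsilon e$ lies in a box that was discarded by condition~a), and that case is not a technicality you can defer --- condition~a) removes boxes by an ASF comparison against an upper bound $u$ that need not dominate $l(B)$, so the monotonicity chain $\bar{s}(l(B),r,w)\le\bar{s}(F(x),r,w)<\bar{s}(\tilde{u},r,w)$ only tells you that some $u\in\mathcal{U}^p$ had a much better ASF value than $\tilde{u}$ at the discard iteration; since the final output is a nondominated set rather than an ASF-minimal set, this yields no contradiction with $\tilde{u}$ surviving. Indeed, condition~a) is designed to discard boxes that may well contain Pareto (hence potentially $\varepsilon$-dominating) points, so the obstacle you name is real and your sketched strategy (tracking the witness $u$ into the final $\mathcal{U}^p$) will not close it. It is worth noting that the paper's own proof never confronts this either: it quantifies the competitor only over $x'\in B'\in\mathcal{B}_k\setminus B$, i.e., over boxes still present at termination, and silently ignores points in previously discarded boxes; your analysis has in effect exposed that restriction rather than failed to match the paper. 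A related, smaller soft spot in your condition-b) case is that $\mathcal{U}^p$ is reset and rebuilt at every iteration, so the witness $u$ from the discard iteration (or a dominator of it) is not guaranteed to persist into the final $\mathcal{U}^p$; the ``invariant'' you invoke is not literally maintained by Algorithm~2.

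For the case you do handle --- $x$ in a retained box $B^*\in\mathcal{B}_k$ --- your argument is correct but follows a genuinely different route from the paper's. You work entirely in the objective space at termination: from $l\leqq l(B^*)\leqq\tilde{u}-\varepsilon e$ and the stopping criterion $d_h(\mathcal{U}^p,\mathcal{L}^p)\le\varepsilon$ you force $\tilde{l}\geqq l$ for the $\tilde{l}$ nearest $\tilde{u}$, and rule out both $\tilde{l}\ne l$ (nondominance of $\mathcal{L}^p$) and $\tilde{l}=l$ (the $\varepsilon\sqrt{m}>\varepsilon$ computation, valid since $m\ge 2$). The paper instead constructs an auxiliary uniform lower bound $\tilde{l}_i=f_i(\tilde{x})-\tfrac12\omega_k L_{\max}$, uses $\varepsilon\ge\tfrac12\omega_k\|L\|$ to get $F(\tilde{x})-\varepsilon e<\tilde{l}\leqq l\le F(x)$ for $x$ in the same box, and derives the other-box contradiction from nondominance of $\mathcal{L}^p$. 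Your version avoids the Lipschitz-constant bookkeeping and the paper's dubious equality $d_h(\mathcal{U}^p,\mathcal{L}^p)=\tfrac12\omega_k\|L\|$, at the cost of leaning directly on the Hausdorff stopping test; either way, the substantive gap in both arguments is the one you identified.
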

\begin{proof}
  Suppose $\tilde{x}$ is the midpoint of the box $B\in\mathcal{B}_k$ and $l\in\mathcal{L}^p$ is corresponding lower bound. According to (\ref{eq1}), we know that
  \begin{align}
    \varepsilon\geq \frac{1}{2}\omega_k\|L\|>\frac{1}{2}\omega_k L_{{\rm max}},\label{ineq2}
  \end{align}
  where $L_{max}=\max\{L_i,i=1,\dots,m\}$. Then we can obtain a lower bound $\tilde{l}=(\tilde{l}_1,\ldots,\tilde{l}_m)^T$ whose component can be calculated by
  \begin{align*}
    \tilde{l}_i=f(\tilde{x})_i-\frac{1}{2}\omega_k L_{{\rm max}},
  \end{align*}
  and further, it is easy to see that $\tilde{l}\leqq l$.

  On the one hand, for every $x\in B$, we have
  \begin{align*}
    F(\tilde{x})-\varepsilon e <\tilde{l}\leqq l\leq F(x).
  \end{align*}

  On the other hand, assume there exists another box $B'$ and there exists a feasible point $x'\in B'\in\mathcal{B}_k\backslash B$ such that $F(x')\leq F(\tilde{x})-\varepsilon e$.

  By (\ref{lb}), (\ref{ineq2}) and the Pareto dominance relation between $l$ and $\tilde{l}$, we have
    \begin{align*}
    F(x')-\frac{1}{2}\omega_k\|L\|\leq F(x')\leq F(\tilde{x})-\varepsilon e<\tilde{l}\leqq l,
  \end{align*}
  which is a contradiction to the fact $\mathcal{L}^p$ is a nondominated lower bound set. Thus, $\tilde{x}\in\mathcal{X}$ is an $\varepsilon$-efficient solution of problem (\ref{MOP}). \qed
\end{proof}
\section{Experimental Results}
RBB has been implemented in Python 3.8 with fundamental packages like numpy, scipy and multiprocessing. Now we show the experimental results on 2 to 7 objectives using the RBB. In all experiments, we use MOEA/D-DE \cite{ref24} with the population size 10 and 20 generations in the proposed algorithm.

\subsection{Test problem ZDT1}
First, we consider the 10-variable ZDT1 problem with $x=(x_1,\ldots,x_n)\in[0.2, 1]\times[0, 1]^{n-1}$. The Pareto front spans continuously in $f_1\in[0.2,1]$ and follows a function relationship $f_2=1-\sqrt{f_1}$. To investigate the effect of a weight vector in the distribution of preferred solutions, we use RBB with $\sigma=0.005$ and three different weight vectors: $(0.5, 0.5)$, $(0.8, 0.2)$ and $(0.2, 0.8)$ on ZDT1 problem. A reference point $r=(0.4, 0.15)$ and the precision parameters $(\varepsilon,\delta)=(0.0015, 0.00015)$ are used. Fig. 1(a) shows the influence of the weight vectors on the distribution of objective vectors. As expected, the obtained objective vectors with the first weight vector do not put emphasis on either of objectives. For the second weight vector, more emphasis is placed on $f_1$, thus the obtained objective vectors are closer to the minimum of $f_1$. On the contrary, the solutions with the third weight vector are closer to the minimum of $f_2$. The results show that if the decision maker is interest in some objectives more than the others, a biased distribution of objective vectors can be obtained by RBB based on the reference point. In the subsequent numerical experiments, we use a uniform weight vector.

\subsection{Test problem ZDT2}
The 10-variable ZDT2 problem is considered next. The value of the Pareto front satisfies $f_2=1-f_1^2$ with $f_1\geq 0$. Fig. 1(b) shows the effect of different $\sigma$ values on the range of preferred solutions. A reference points $r=(0.3,0.3)$ and the precision parameters $(\varepsilon,\delta)=(0.0015, 0.00015)$ are chosen for this problem. Three different $\sigma$ values of 0.005, 0.015 and 0.05 are chosen. Objective vectors with $\sigma=0.005$ are shown on the ture Pareto front. Objective vectors with other $\sigma$ values are shown with an offset to the true Pareto front. It is clear that the larger $\sigma$ value is, the larger the range of objective vectors obtained. Therefore, if the decision maker want to obtain a large range of objective vectors, a large value of $\sigma$ can be chosen.

\subsection{Test problem ZDT3}
The 5-variable ZDT3 problem has a disconnected set of Pareto fronts. We consider four reference points, of which three are infeasible and one is feasible. Fig. 1(c) shows the obtained objective vectors with $\sigma=0.003$ and $(\varepsilon,\delta)=(0.005, 0.0002)$.  It can be clearly seen that disconnection of the Pareto front and the feasibility of reference points do not cause any difficulty to the proposed algorithm.

\begin{figure}[H]
  \centering
  \subfigure[]{
    \includegraphics[width=0.32\textwidth]{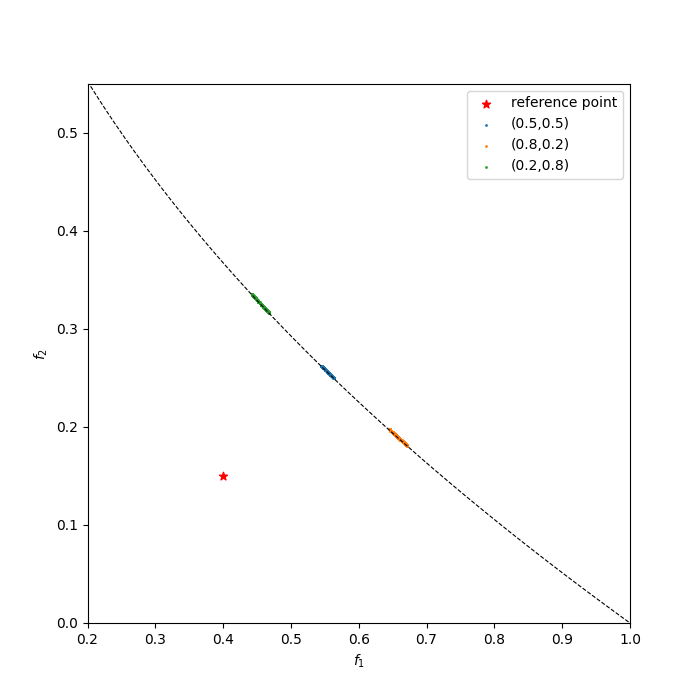}}
  \subfigure[]{
    \includegraphics[width=0.32\textwidth]{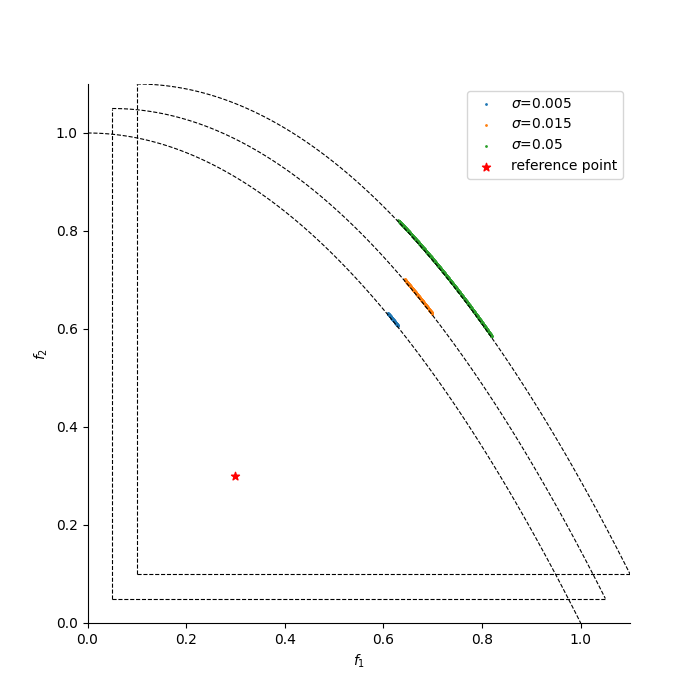}}
  \subfigure[]{
    \includegraphics[width=0.32\textwidth]{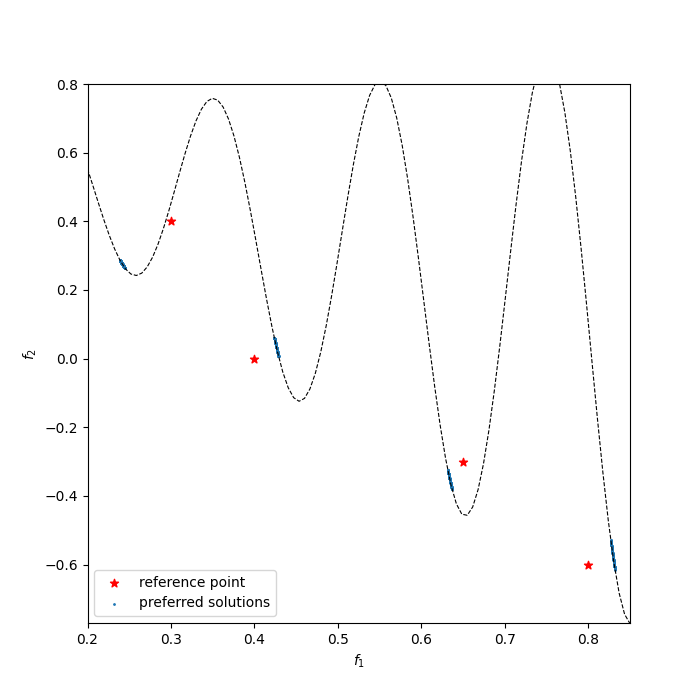}}

  \caption{Objective vectors obtained for the ZDT problem. \textbf{a} Biased objective vectors with different weight vectors according to a reference point for ZDT1. \textbf{b} Effect of $\sigma$ in obtaining distribution of objective vectors on ZDT2. \textbf{c} Objective vectors obtained for four reference points on ZDT3. }
\end{figure}

\subsection{Three-objective DTLZ2 problem}
The 3-objective, 7-variable DTLZ2 problem has a non-convex Pareto front whose function value satisfies $\sum_{i=1}^3 f_i^2=1$. Two reference points are chosen as follows: (i) (0.4, 0.4, 0.8) and (ii) (0.8, 0.8, 0.6). We use $\sigma=0.004$ here. Fig. 2(a) shows the obtained objective vectors with the precision parameters $(\varepsilon,\delta)=(0.004,0.006)$. This demonstrates the applicability of the proposed algorithm in solving three-objective optimization problems.

\subsection{Five-objective DTLZ2 problem}
Here, we apply $\sigma=0.005$ and $(\varepsilon,\delta)=(0.006,0.008)$ to the 5-variable DTLZ2 problem. Two reference points are chosen as follows: (i) (0.8, 0.8, 0.8, 0.8, 0.8) and (ii) (0.2, 0.2, 0.2, 0.2, 0.8). Fig. 2(b) shows the value-path plot of the obtained objective vectors. It is obvious that two distinct sets of objective vectors according to the above reference points are obtained by the solution process of RBB. Since the Pareto front of the DTLZ2 problem satisfies $\sum_{i=1}^{m}f_i^2=1$, we compute the left side of this expression for all obtained solutions and the values are found to lie within $[1,1.0399]$ (at most 3.99\% from one), which means that all objective vectors are very close to the true Pareto front.
\subsection{Seven-objective DTLZ2 problem}
We then attempt to solve 7-objective, 7-variable DTLZ2 problem with one reference point: $f_i=0.25$ for all $i = 1, 2,..., 10.$
We use $\sigma=0.002$ and $(\varepsilon, \delta)=(0.006, 0.008)$. The value-path plot of the objective vectors is shown in Fig. 2(c). It is clear that the reference point effectively guides the search towards the region of interest, so that the images of the obtained objective vectors concentrates near $f_i=0.4$. When we compute $\sum_{i=1}^{m}f_i^2$ of all obtained solutions,
the values are found to lie within $[1,1.0076]$ (at most 0.76\% from one), thereby meaning that all objective vectors are almost on the true Pareto front.

\begin{figure}[H]
  \centering
      \subfigure[]{
    \includegraphics[width=0.32\textwidth]{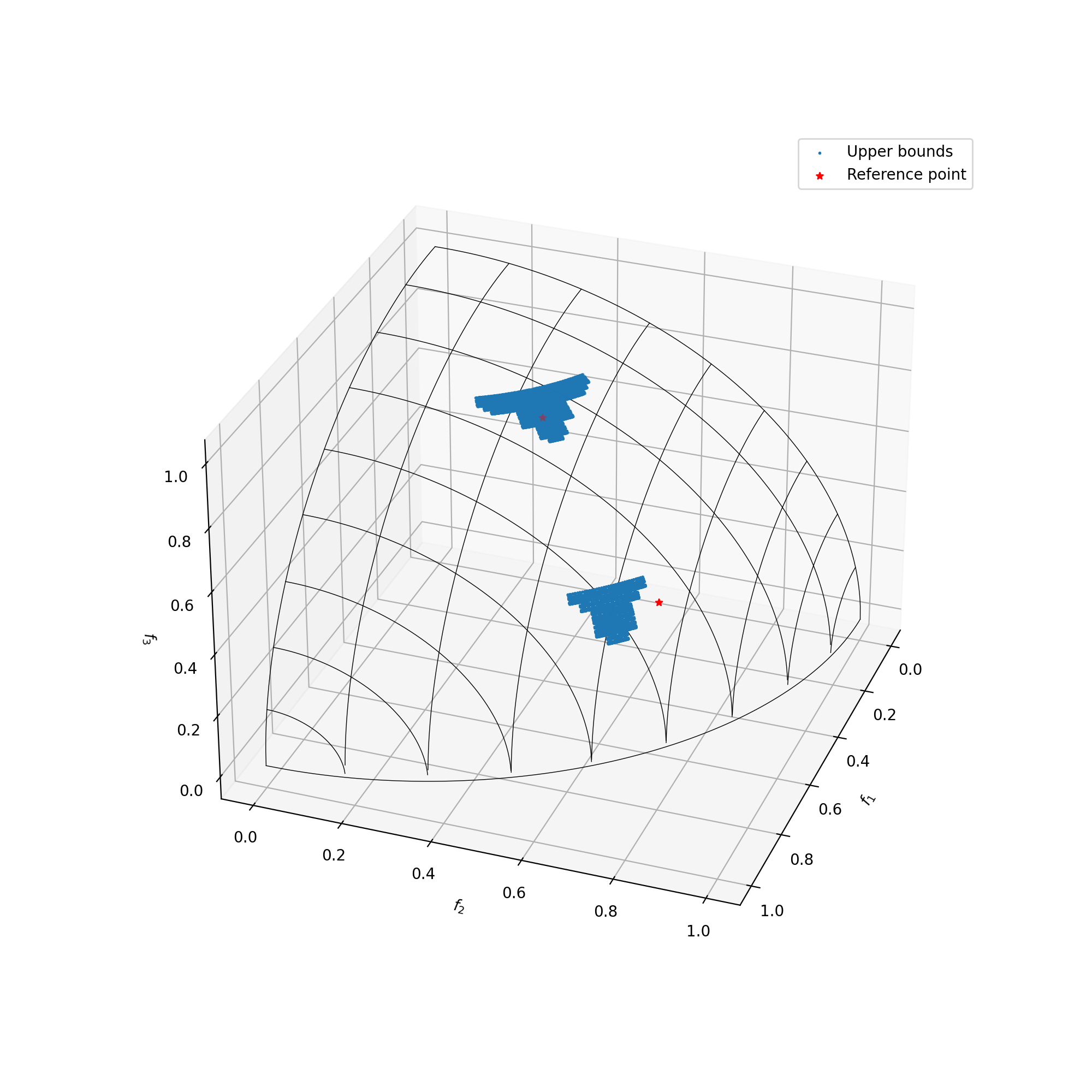}}
  \subfigure[]{
    \includegraphics[width=0.32\textwidth]{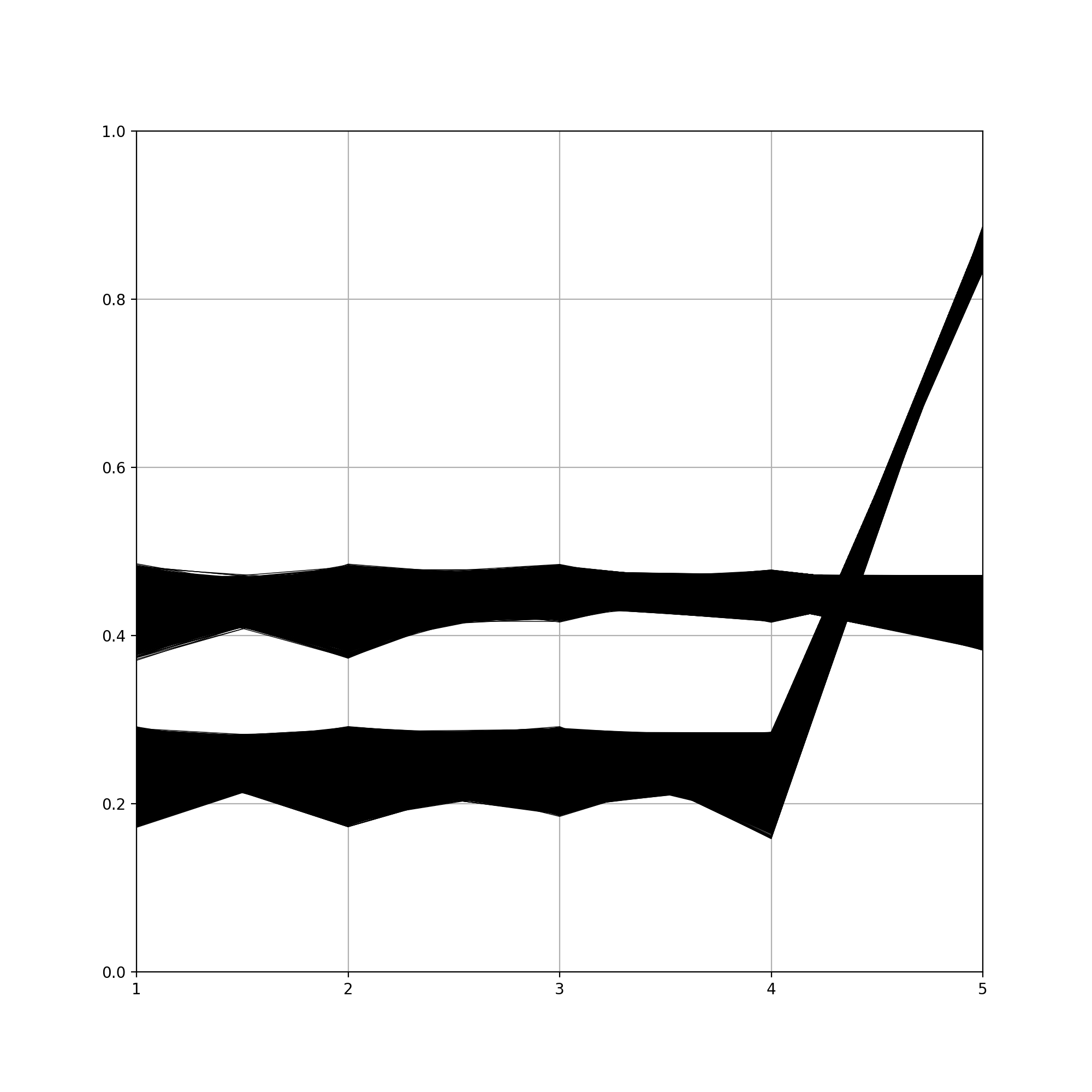}}
  \subfigure[]{
    \includegraphics[width=0.32\textwidth]{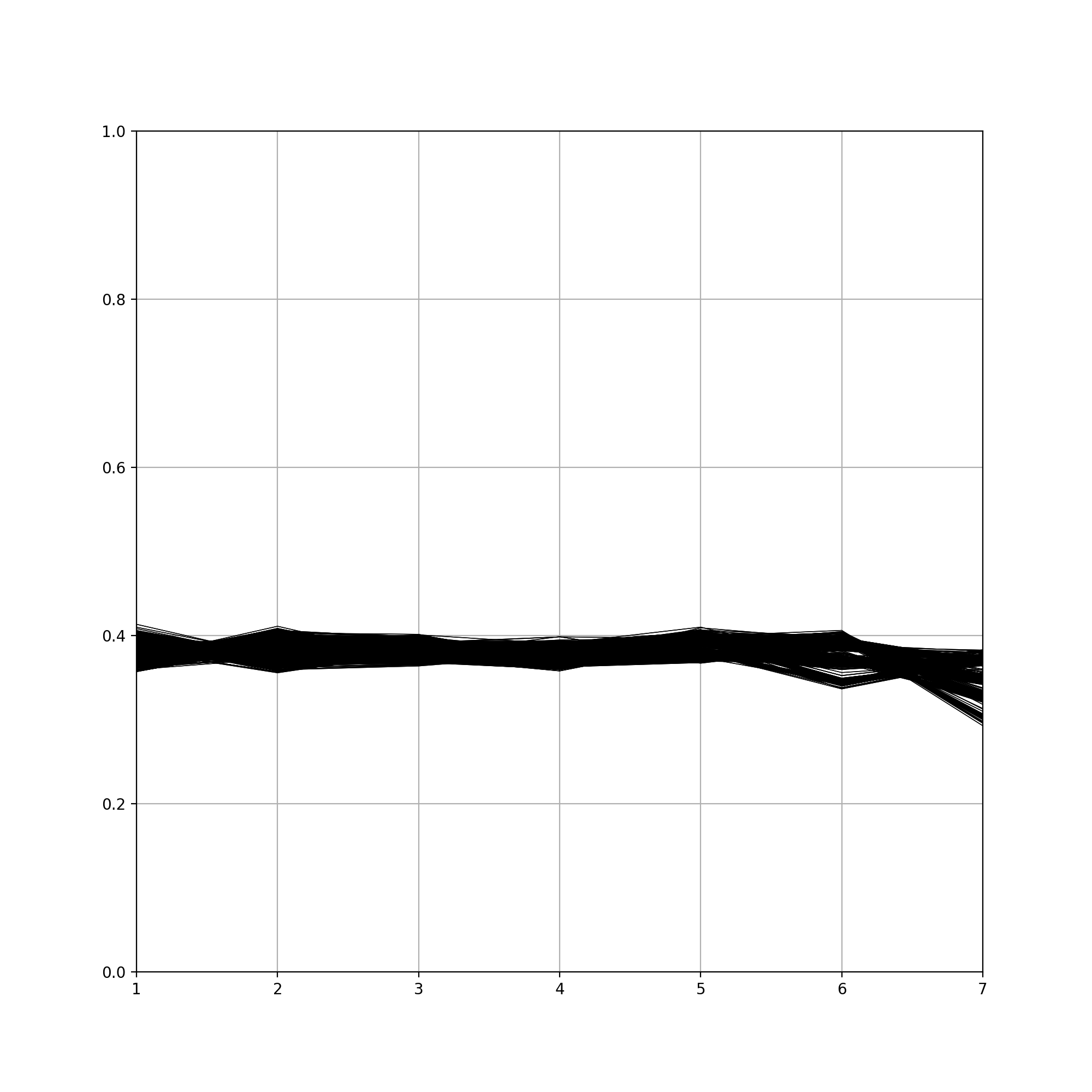}}
  \caption{Objective vectors obtained by RBB on the DTLZ2 test problem. \textbf{a} Objective vectors obtained for two reference points on 3-objective DTLZ2. \textbf{b} Objective vectors obtained for two reference points on 5-objective DTLZ2. \textbf{c} Objective vectors obtained for a reference point on 7-objective DTLZ2.}
\end{figure}

\subsection{Comparison experiment}
We compare RBB with g-NSGA-II \cite{ref15} and WASF-GA \cite{ref19} in the ZDT and DTLZ test problems. Due to lack of space, we will plot just the objective vectors generated in the ZDT2 problem. Both of g-NSGA-II and WASF-GA have a population size of 100 individuals and 250 generations for this problem. The $\sigma$ value of 0.015 for RBB is chosen. The objective vectors provided by each algorithm and the reference points can be seen in Fig. 3. It is easy to see that the range of objective vectors obtained by RBB is quite different from those obtained by g-NSGA-II and WASF-GA. This is because, in g-NSGA-II and WASF-GA, the region of interest is defined by the Pareto dominance relation between the reference point and objective vectors. Therefore, the distance between the reference point and the Pareto front significantly determines the range of the region of interest. As a result, if a reference point is provided that is far from the Pareto front, it may still be difficult for the decision maker to find a preferred solution among the alternatives. In contrast, the range of the region of interest generated by RBB is only associated with the $\sigma$ value, so the range of the region of interest is fixed whether the reference point is far from the Pareto front surface or not.

\begin{figure}[H]
  \centering
  \subfigure[]{
    \includegraphics[width=0.31\textwidth]{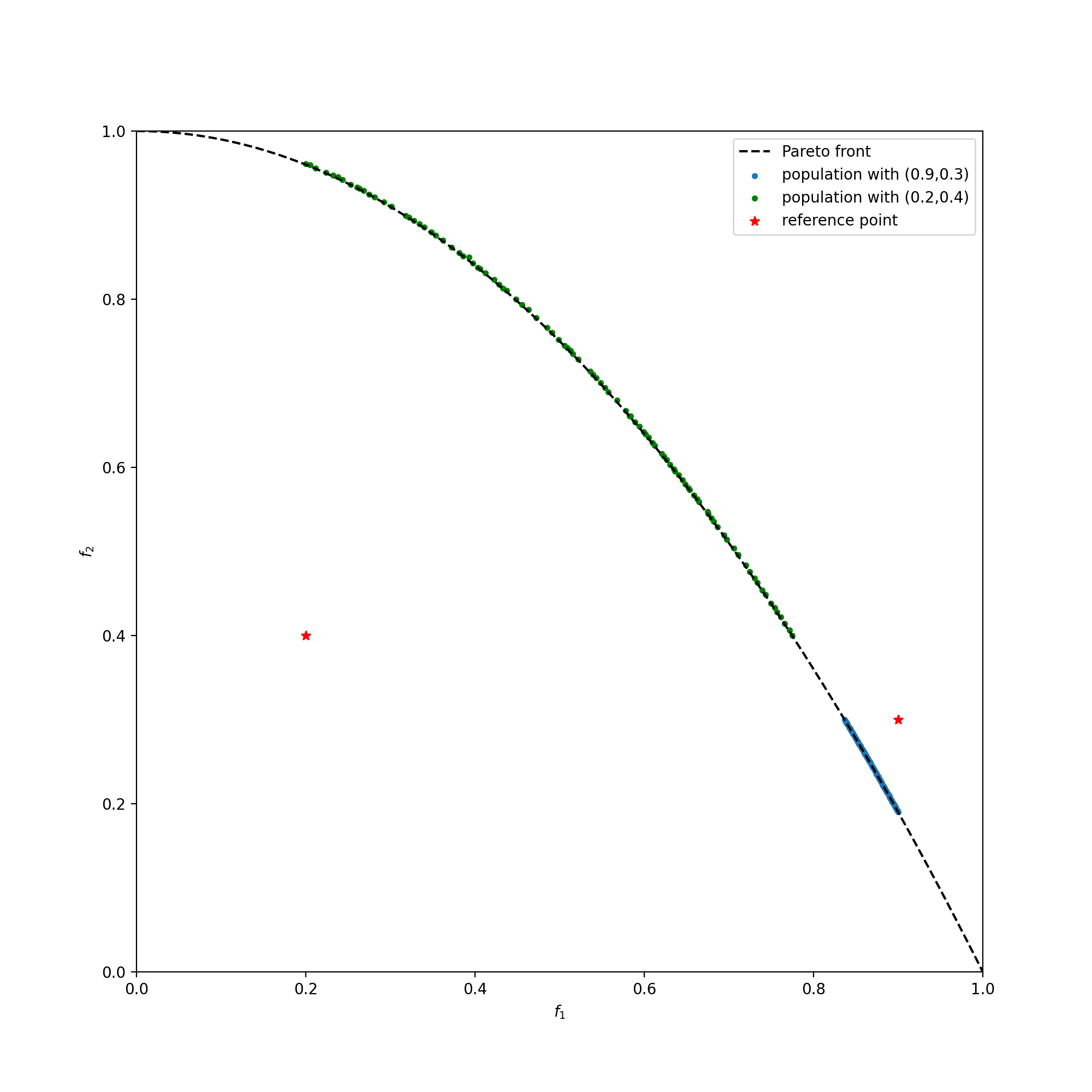}}
  \subfigure[]{
    \includegraphics[width=0.31\textwidth]{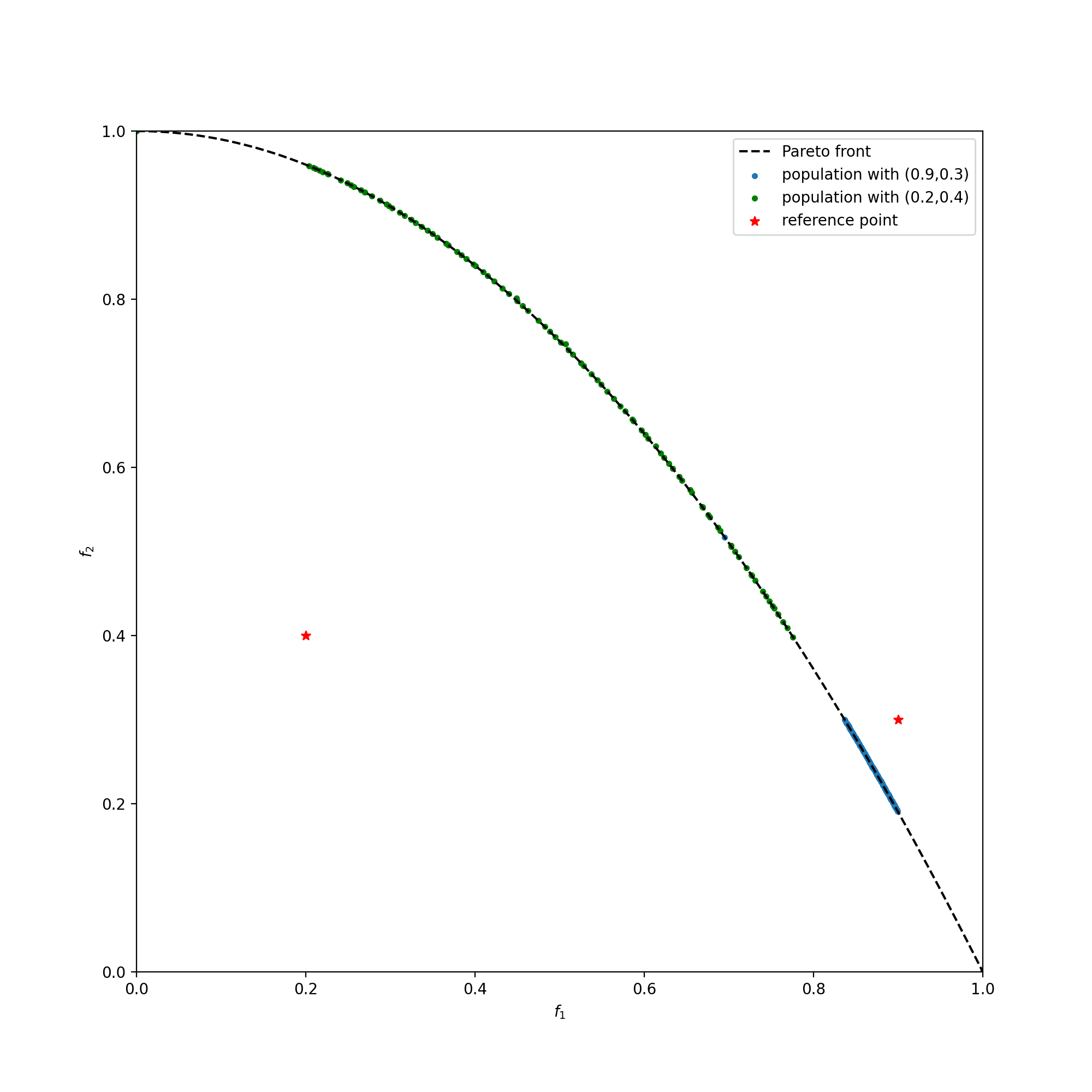}}
      \subfigure[]{
    \includegraphics[width=0.31\textwidth]{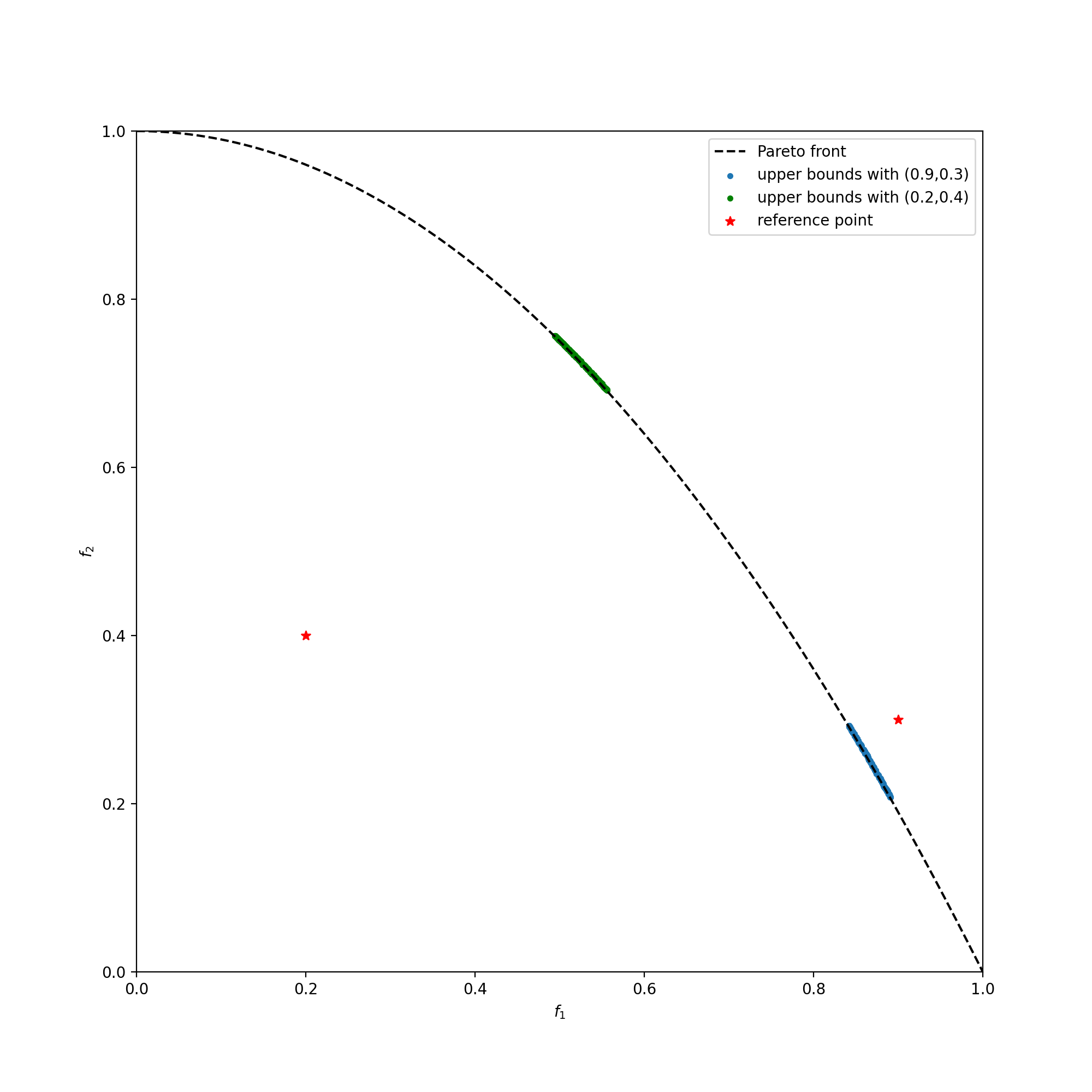}}
  \caption{Objective vectors obtained for the ZDT2 test problem. \textbf{a} Objective vectors obtained by g-NSGA-II. \textbf{b} Objective vectors obtained by WASF-GA. \textbf{c} Objective vectors obtained by RBB.}
\end{figure}

Besides, many practical problems are less methodically constructed than the ZDT or DTLZ problems, and thus tend to be more difficult to analyse. Of particular interest is that the following VNT problem \cite{ref21} are nonseparable and multimodal, characteristics that are known to be more representative of practical problems,
$$F(x)=
\begin{pmatrix}
0.5(x_1^2+x_2^2)+\sin(x_1^2+x_2^2)\\
\frac{(3x_1-2x_2+4)^2}{8}+\frac{(x_1-x_2+1)^2}{27}+15\\
\frac{1}{x_1^2+x_2^2+1}-1.1\exp(-x_1^2-x_2^2)
\end{pmatrix},(x_1,x_2)\in[-3,3]^2.$$

Here, we compare the objective vectors retrieved by RBB with those generated by g-NSGA-II and WASF-GA on the VNT problem. For RBB, the precision parameters $(\varepsilon,\delta)=(0.03, 0.007)$ and the region parameter $\sigma=0.0003$ are chosen. A reference point $r$ is $(4, 10, 0)$. Fig. 4 shows the regions of interest generated by each of the three algorithms respectively. It is easy to see that g-NSGA-II is unable to identify the correct region of interest. Although WASF-GA finds several Pareto solutions on the region of interest, these solutions do not perfectly represent the region of interest corresponding to the reference point. The main reason for the poor performance of g-NSGA-II and WASF-GA is that they determine the region of interest by the Pareto dominance, which makes it easier to handle the simplex-like front. However, the Pareto front of this test problem consists of a degenerate convex line and a degenerate mixed convex/concave line. This means that these degenerate lines occupy only a small part of the search region, so most of the search is wasted in two algorithms. Compared to the previous two algorithms, RBB generates a set of Pareto solutions distributed in the region of interest corresponding to the reference point.

\begin{figure}
  \centering
  \subfigure[]{
    \includegraphics[width=0.31\textwidth]{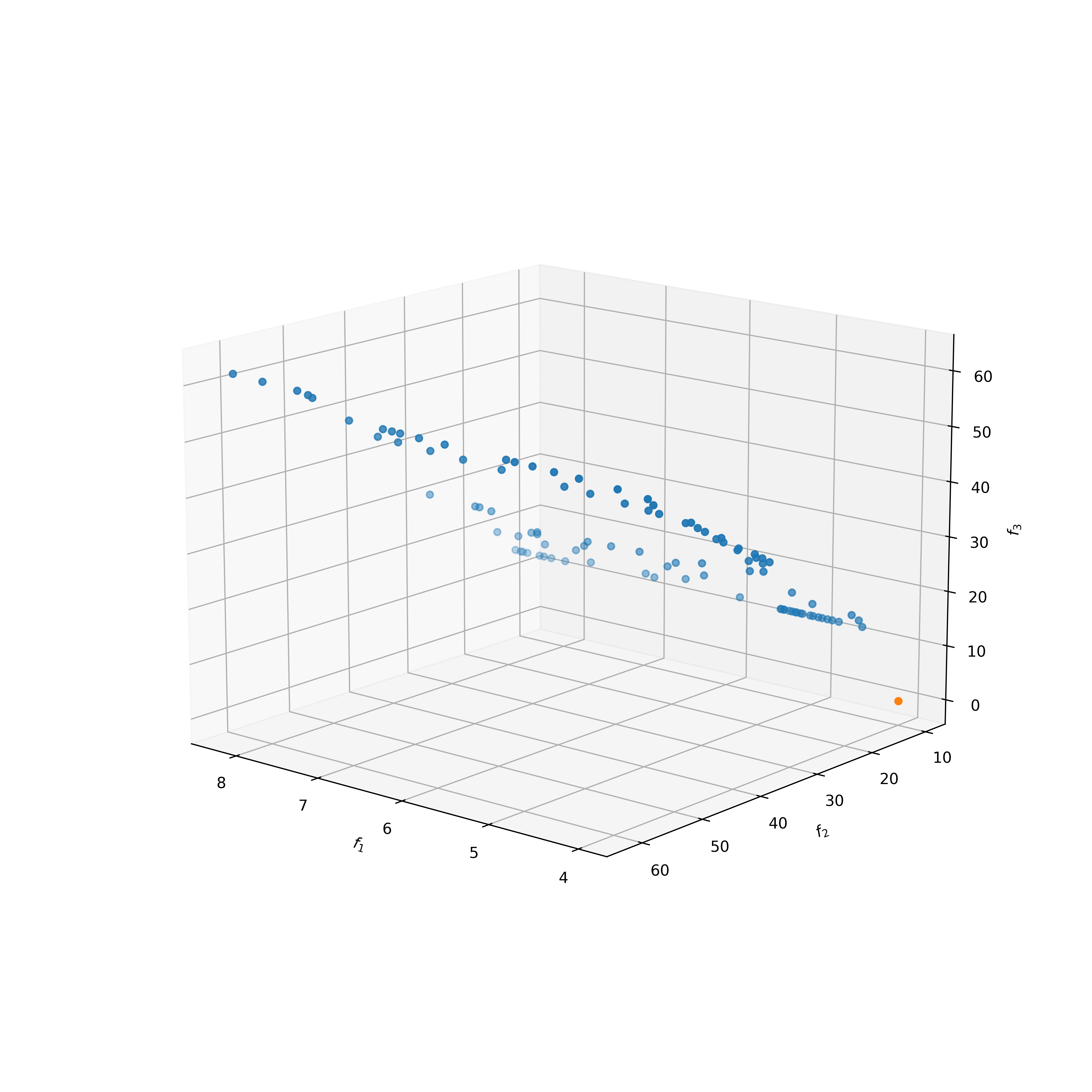}}
  \subfigure[]{
    \includegraphics[width=0.31\textwidth]{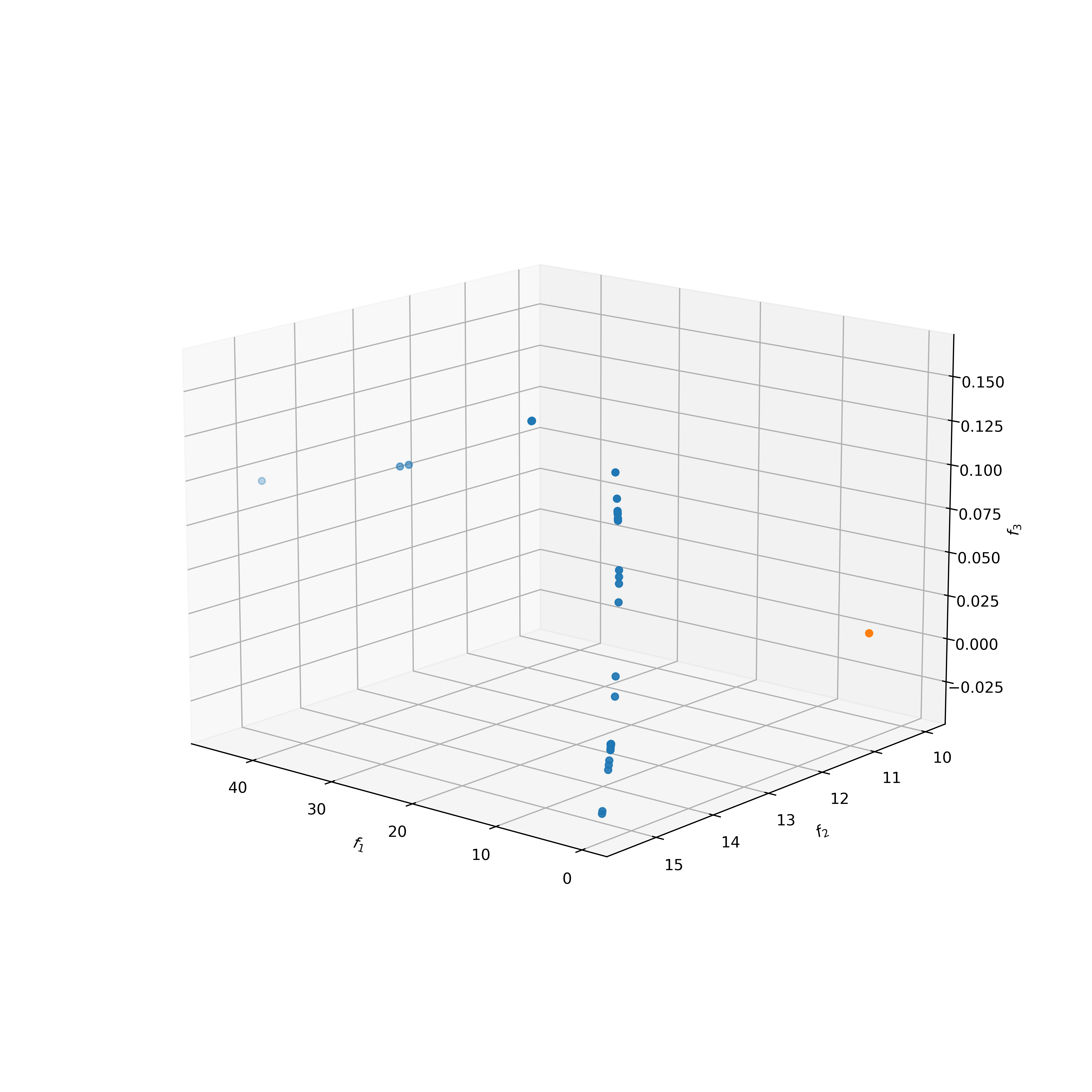}}
      \subfigure[]{
    \includegraphics[width=0.31\textwidth]{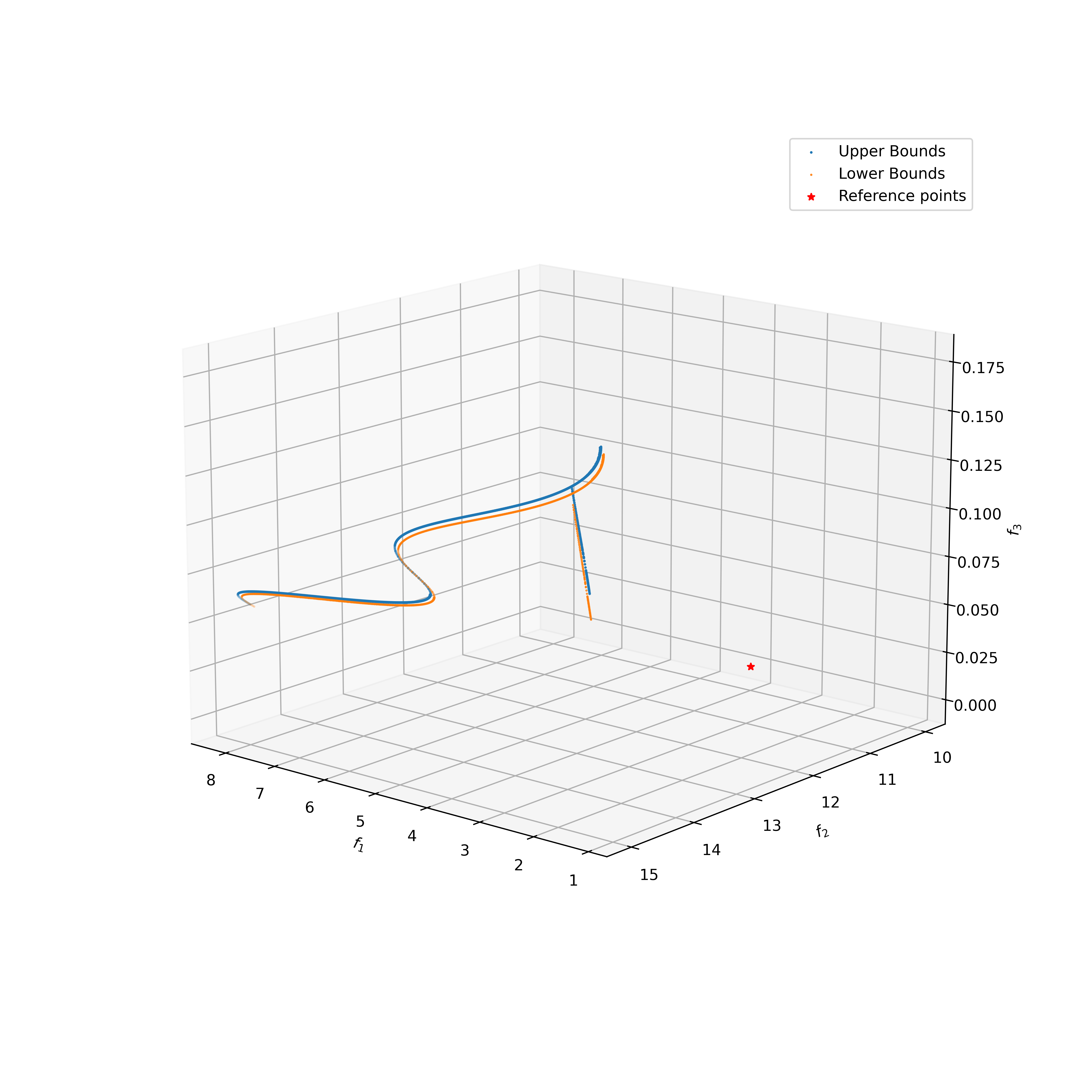}}
  \caption{Objective vectors obtained for the VNT test problem. \textbf{a} Objective vectors obtained by g-NSGA-II. \textbf{b} Objective vectors obtained by WASF-GA. \textbf{c} Objective vectors obtained by RBB.}
\end{figure}

\subsection{Welded beam design problem}
The welded beam design problem \cite{ref3,ref4,ref18} has four real-parameter variables $x=(x_1, x_2, x_3, x_4)$ and four non-linear constraints. One of the two objectives is to minimize the cost of fabrication and other is to minimize the end deflection of the welded beam:
$$F(x)=
\begin{pmatrix}
1.10471x_1^2 x_3+ 0.04811x_2x_4(14.0+x_3)\\
2.1592/(x_2x_4^3)
\end{pmatrix}$$
subject to the constraints
\begin{align*}
&g_1(x)=13600-\tau(x)\geq0,\\
&g_2(x)=30000-\sigma(x)\geq0,\\
&g_3(x)=x_2-x_1\geq0,\\
&g_4(x)=P_c(x)-6000\geq0,\\
&0.125\leq x_1,x_2\leq 5,\\
&0.1\leq x_3,x_4\leq 10.
\end{align*}
where the stress and buckling terms are non-linear to design variables and are given as follows
\begin{align*}
&\tau(x)=\sqrt{(\tau')^2+(\tau'')^2+(x_3\tau'\tau'')/\sqrt{0.25(x_3^2+(x_1+x_4)^2)}},\\
&\tau'=\frac{6000}{\sqrt{2}x_1x_3},\\
&\tau''=\frac{6000(14+0.5x_3)\sqrt{0.25(x_3^2+(x_1+x_4)^2)}}{1.414x_1x_3(x_3^2/12+0.25(x_1+x_4)^2)},\\
&\sigma(x)=\frac{504000}{x_2x_4^2},\\
&P_c(x)=64746.022(1-0.0282346x_4)x_4x_2^3.
\end{align*}

Here, instead of finding the complete Pareto front, we are interested in finding the regions corresponding to three chosen reference points: (i) (4, 0.003), (ii) (20, 0.002), (iii) (32, 0.0007). Figs 4(a) shows the results with RBB with $\sigma=0.0001$ and $(\varepsilon, \delta)=(0.3, 0.02)$ on the welded beam design problem. It is easy to see that if the decision maker is interested in knowing trade-offs in three major areas (minimum cost, intermediate between cost and deflection, and minimum deflection), RBB is able to characterize the regions of interest, instead of finding the whole Pareto front, thus allowing the decision maker to consider only a few solutions distributing among the regions of interest. Furthermore, when the decision maker provides a feasible reference point such as the second reference point, meaning that his/her desirable aspiration levels are conservative. In this case, RBB can provide a set of $\varepsilon$-efficient solutions which are better than the given reference point.

\begin{figure}[H]
  \centering
  \subfigure[]{
    \includegraphics[width=0.45\textwidth]{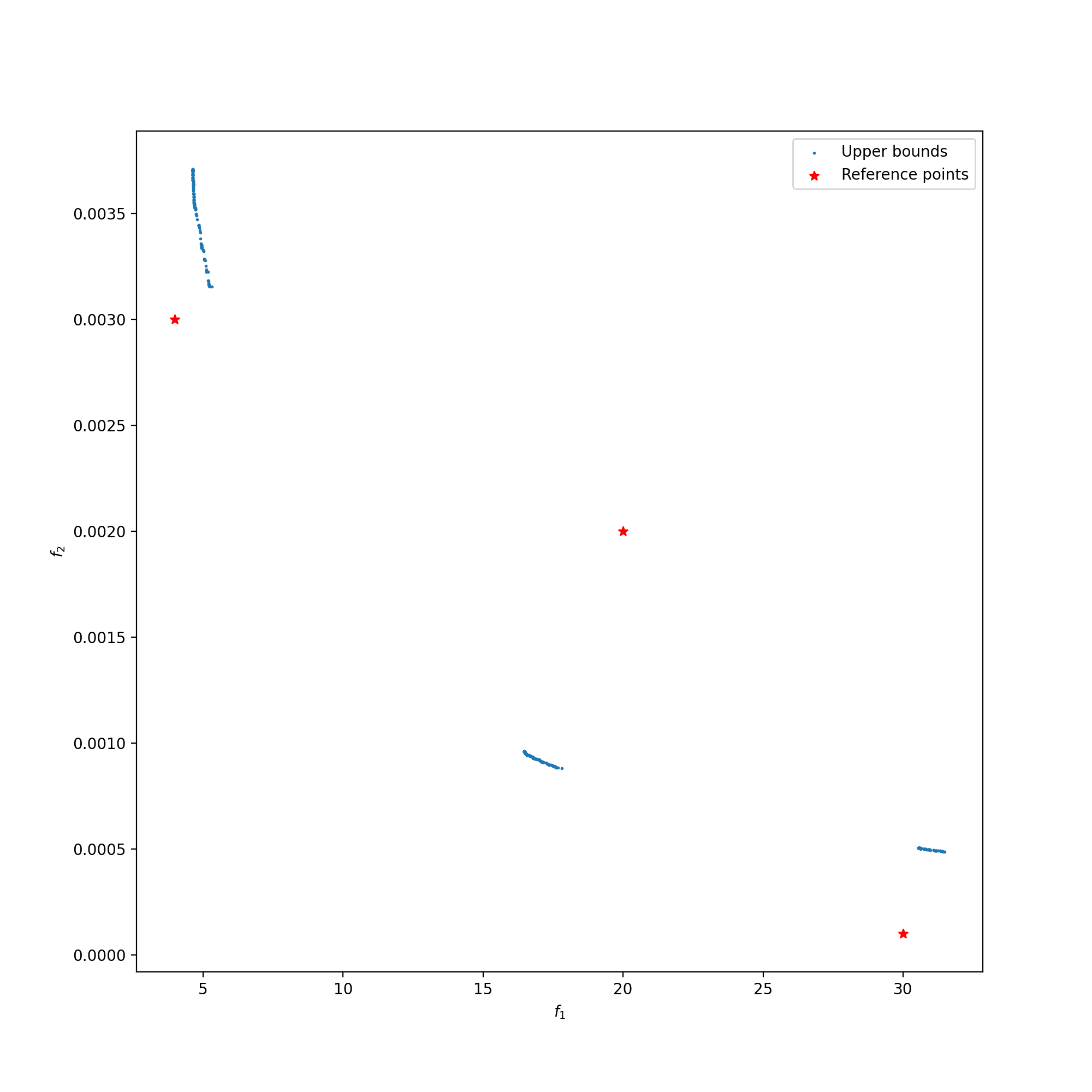}}
  \subfigure[]{
    \includegraphics[width=0.45\textwidth]{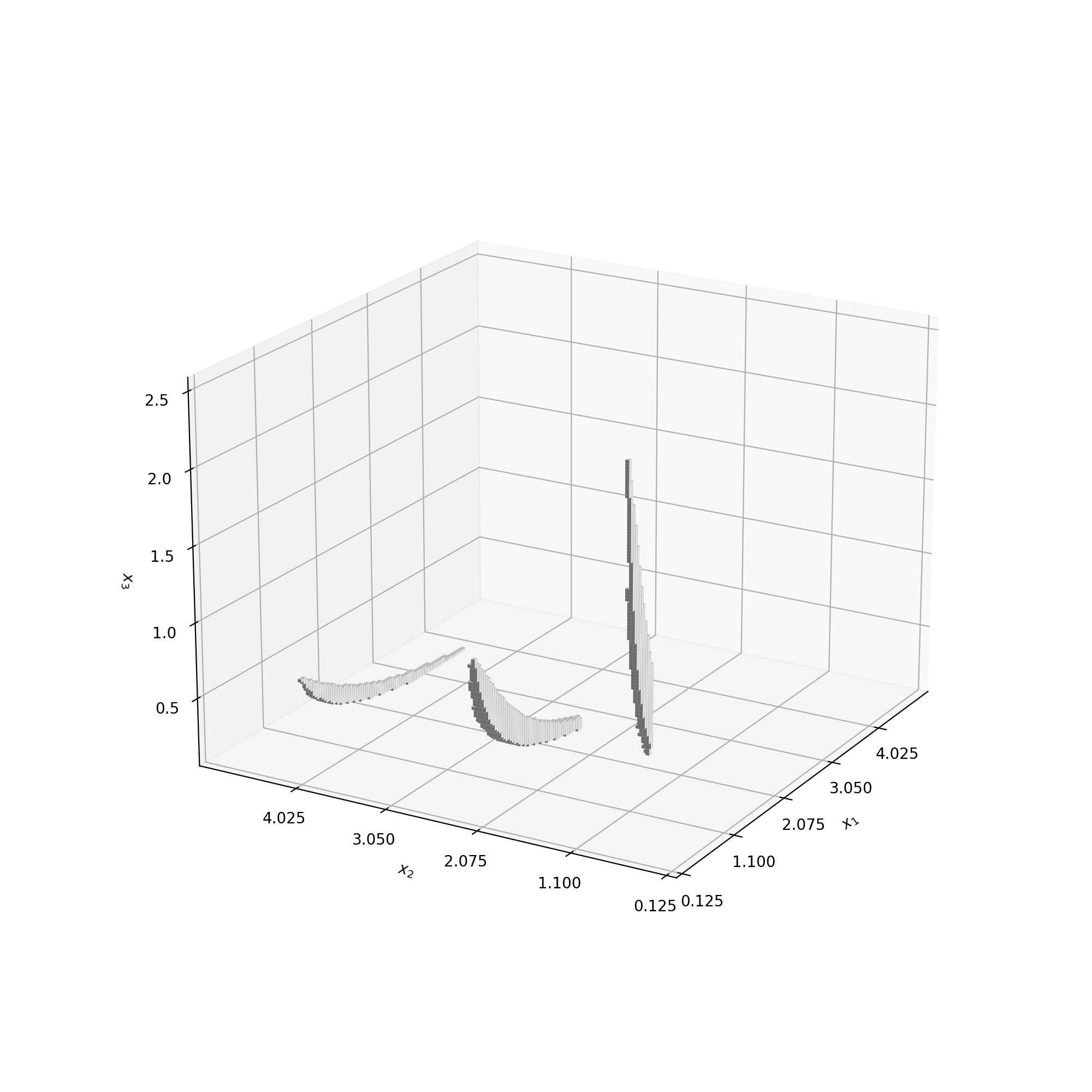}}
  \caption{Computational results obtained on the welded beam design problem. \textbf{a} Objective vectors obtained by RBB for three reference points. \textbf{b} Projections of Subboxes containing the preferred solutions onto the $x_1-x_2-x_3$ space.}
\end{figure}

\section{Conclusion}
Most of the current branch and bound algorithms for MOPs are presented to approximate the complete Pareto set, but different parts of the Pareto set might be more preferred by the decision maker than some others, while some parts might not be of interest at all. Therefore, not only are computational resources wasted in exploring undesired regions, but also the cognitive pressure of decision maker increases.

To avoid this shortcoming, we have proposed a new multiobjective optimization method called \emph{reference-point-based branch and bound algorithm} (RBB), which tries to approximate the region of interest of the Pareto front corresponding to the reference point. To achieve this purpose, RBB employs a new discarding test which takes into account the values of each lower and upper bounds on an ASF, and further controls the discarding pressure by a predefined parameter $\sigma$. In addition, RBB uses the heuristic search to improve the solution quality. We have proven that the candidate solutions obtained by RBB are $\varepsilon$-efficient solutions whose images are distributed in the region of interest. The usefulness of RBB has been demonstrated on several test problems including the ZDT test problems, the 3-to 7-objective DTLZ2 test problems and the welded beam design problem.

\begin{acknowledgements}
This work is supported by the Major Program of National Natural Science Foundation of China (Nos. 11991020, 11991024), the General Program of National Natural Science Foundation of China (No. 11971084), the Team Project of Innovation Leading Talent in Chongqing (No. CQYC20210309536), the Funds for International Cooperation and Exchange of the National Natural Science Foundation of China (No. 12261160365) and the Natural Science Foundation of Chongqing (No. cstc2019jcyj-zdxmX0016)
\end{acknowledgements}

\end{document}